\newcommand{\qed}{\hfill\hbox{\rule{3pt}{6pt}}}
\newcommand{\T}{{\mathcal T}}
\newtheorem{theorem}{Theorem}[section]
\newtheorem{lemma}[theorem]{Lemma}
\date{}
\title{ Non-existence of antipodal cages\\ of even girth}
\author{ Slobodan Filipovski\thanks{Supported in part by the Slovenian Research Agency (research program P1-0285 and Young Researchers Grant).}\\
\small University of Primorska\\[-0.8ex]
\small Koper, Slovenia\\
\small\tt slobodan.filipovski@famnit.upr.si\\
}
\begin{document}
\maketitle

\begin{abstract}
The Moore bound $M(k,g)$ is a lower bound on the order of $k$-regular graphs of girth $g$ (denoted $(k,g)$-graphs). The excess $e$ of a $(k,g)$-graph of order $n$ is the difference $n-M(k,g).$ A $(k,g)$-cage is a $(k,g)$-graph with the fewest possible number of vertices, among all $(k,g)$-graphs.
A graph of diameter $d$ is said to be antipodal if, for any vertices $u, v, w$ such that $d(u,v)=d$ and $d(u, w)=d$, it follows that $d(v, w)=d$ or $v=w.$
In \cite{bigito} Biggs and Ito proved that any $(k,g)$-cage of even girth $g=2d\geq6$ and excess $e\leq k-2$ is a bipartite graph of diameter $d+1.$
In this paper we treat $(k,g)$-cages of even girth and excess $e\leq k-2.$ Based on a spectral analysis we give a relation between the eigenvalues of the adjacency matrix $A$ and the distance matrix $A_{d+1}$ of such cages. Moreover, following the methodology used in \cite{bigito} and \cite{pineda}, we prove the non-existence of the antipodal $(k,g)$-cages of excess $e$, where $k\geq e+2\geq4$ and $g=2d\geq14.$

\bigskip\noindent \textbf{Keywords:} $k$-regular graphs, antipodal cages, excess, multiplicities
\end{abstract}

\section{Introduction}
\quad A $(k,g)$-{\em graph} is a $k$-regular graph having girth $g$. A $(k,g)$-\emph{cage} is a $(k,g)$-graph of smallest order. 
The \textit{Cage Problem} or \textit{Degree/Girth Problem} calls for finding cages;  Tutte was the first to study $(k,g)$-cages,
 \cite{K}. A $(k,g)$-graph exists for any pair $(k, g),$ where $k \geq2$ and $g\geq 3$, see \cite{erdossachs} and \cite{sachs}. 
It is well known that the $(k,g)$-graphs have at least $M(k,g)$ vertices, where
\begin{equation} \label{moore1}
 M(k,g) = \left\{
\begin{array}{lc} 1 + k + k(k-1) +\cdot\cdot\cdot+ k(k-1)^{(g-3)/2},
                                                         & g \mbox{ odd, } \\
                              2 \left( 1 + (k-1)+ \cdot\cdot\cdot+ (k-1)^{(g-2)/2} \right),
                                                         & g \mbox{ even. }
\end{array}
\right.
\end{equation}
If $G$ is a $(k,g)$-graph of order $n$, then we define the \emph{excess} $e$ of $G$ to be $n-M(k,g).$
The graphs whose orders are equal to $M(k,g)$ (excess $0$) are called {\em Moore graphs}. Their classification has been completed except for the case $k=57$ and $g=5.$ The Moore graphs exist if $k=2$ and $ g \geq 3 $, $g=3$ and $ k \geq 2 $,
$g=4$ and $ k \geq 2 $, $g = 5$ and $k=2, 3, 7 $, or $g = 6, 8, 12$ and a generalized
$n$-gon of order $k-1$ exists, see  \cite{banito},  \cite{dam} and \cite{exojaj}.\\
The following three results concern the graphs of even girth.

\begin{theorem}[Biggs and Ito \cite{bigito}]\label{bigito}
Let $G$ be a $(k,g)$-cage of girth
$g = 2d \geq 6$ and excess $e$.
If $e \leq k-2$, then $e$ is even and $G$ is bipartite of diameter $d + 1$.
\end{theorem}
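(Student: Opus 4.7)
I would establish the three conclusions ($G$ bipartite, $G$ has diameter $d+1$, $e$ even) in that order, since the latter two follow easily once bipartiteness is known. Throughout, fix $u \in V(G)$ and write $L_i(u) = \{v : d(u,v) = i\}$.

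The plan starts with a standard BFS calculation. The girth $g = 2d$ forces $|L_i(u)| = k(k-1)^{i-1}$ for $1 \leq i \leq d-1$, since any collision in these levels would produce a cycle shorter than $g$. Summing yields $\sum_{i=0}^{d-1}|L_i(u)| = M(k, 2d-1)$, so by a direct computation from (\ref{moore1}) one gets $\sum_{i \geq d} |L_i(u)| = (k-1)^{d-1} + e$. Since the edges between $L_{d-1}(u)$ and $L_d(u)$ number $(k-1)|L_{d-1}(u)| = k(k-1)^{d-1}$ from one side and at most $k\,|L_d(u)|$ from the other, $|L_d(u)| \geq (k-1)^{d-1}$, whence
\[
\sum_{i \geq d+1} |L_i(u)| \;\leq\; e \;\leq\; k-2.
\]
This tight budget on the upper levels drives the rest of the argument.

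For bipartiteness, the hardest step, I would argue by contradiction. Suppose $G$ contains an odd cycle, and let $C$ be a shortest one. Its length $2\ell + 1$ exceeds the girth $2d$, so $\ell \geq d$. Choosing a base point $u \in C$, the minimality of $C$ forces the two cycle-neighbors farthest from $u$ to lie at graph-distance exactly $\ell$ and to be joined by an edge within $L_\ell(u)$; thus $G$ has an intra-level edge at some level $\ell \geq d$. The plan is then to propagate this intra-level edge through the rigid tree structure of depth $\leq d-1$, using cage-minimality (every vertex of a cage lies on a $g$-cycle) to force either a strictly shorter odd cycle (contradicting the choice of $C$) or a collection of more than $k-2$ surplus vertices in $\bigcup_{i \geq d+1} L_i(u)$, contradicting the budget. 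This is the principal obstacle: the sharp hypothesis $e \leq k-2$ is used essentially here, and making the count tight enough to exploit it rather than a cruder bound is the core combinatorial difficulty.

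Once bipartiteness is known, the remaining conclusions are short. Since $L_d(u)$ is an independent set, every $v \in L_d(u)$ has all $k$ neighbors in $L_{d-1}(u) \cup L_{d+1}(u)$, and the edge count from the first step is saturated exactly when $L_{d+1}(u)$ is empty, forcing $e = 0$; hence $e > 0$ gives diameter $\geq d+1$. For the upper bound, suppose $w \in L_{d+2}(u)$: bipartiteness places its $k$ distinct neighbors inside $L_{d+1}(u) \cup L_{d+3}(u)$, but
\[
|L_{d+1}(u)| + |L_{d+3}(u)| \;\leq\; \sum_{i \geq d+1} |L_i(u)| - |L_{d+2}(u)| \;\leq\; (k-2) - 1 \;<\; k,
\]
a contradiction. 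Finally, $k$-regularity together with bipartiteness forces the two parts to have equal size, so $n$ is even; since $M(k, 2d) = 2\sum_{i=0}^{d-1}(k-1)^i$ is even, $e = n - M(k, 2d)$ is even as well.
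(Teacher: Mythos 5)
The paper does not prove this statement at all --- it is quoted verbatim from Biggs and Ito \cite{bigito} --- so your proposal can only be judged on its own merits, and as it stands it has a genuine gap exactly at the heart of the theorem. Your level counts are fine: girth $2d$ gives $|L_i(u)|=k(k-1)^{i-1}$ for $i\leq d-1$, hence $\sum_{i\geq d}|L_i(u)|=(k-1)^{d-1}+e$ and, via the edge count between $L_{d-1}(u)$ and $L_d(u)$, the budget $\sum_{i\geq d+1}|L_i(u)|\leq e\leq k-2$; the reduction to an intra-level edge at level $\ell\geq d$ via an isometric shortest odd cycle is also sound, and the diameter and parity arguments at the end are correct (indeed the bound ``diameter $\leq d+1$'' already follows from the budget alone, since a vertex in $L_{d+2}(u)$ would have all $k$ neighbours inside a set of at most $k-3$ other vertices). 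But the bipartiteness step is only announced, not proved: you say you would ``propagate this intra-level edge through the rigid tree structure'' to force either a shorter odd cycle or more than $k-2$ surplus vertices, and you yourself label this the core combinatorial difficulty. Neither branch of that dichotomy is established, and it is not clear it can be reached the way you set it up: after your reduction the critical case is an edge inside $L_d(u)$ (an odd cycle of length exactly $2d+1$), and the budget $\sum_{i\geq d+1}|L_i(u)|\leq k-2$ says nothing about edges inside $L_d(u)$, whose size $(k-1)^{d-1}+a$ leaves ample room; so the hypothesis $e\leq k-2$ has not yet been brought to bear where it is needed.

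For comparison, the original Biggs--Ito argument (whose setup is recalled in Section~2 of this paper) works not with a vertex-rooted BFS but with the Moore tree $\T_{uv}$ rooted at an edge $f=\{u,v\}$: the $e$ excess vertices $X_f$ and the ``horizontal'' edges are counted and balanced against the $k-2$ bound, and it is this edge-rooted bookkeeping that rules out odd cycles and yields the diameter and parity statements simultaneously. If you want to complete your plan, that is the mechanism you would have to reconstruct (or something equivalent); the vertex-rooted budget alone does not suffice.
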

It is known that these graphs are partially distance-regular. More about almost-distance-regular graphs, see \cite{distgraphs}.
For the next theorem, let $D(k,2)$ denote the  incidence graph of a symmetric $(v,k,2)$-design.
\begin{theorem}[Biggs and Ito \cite{bigito}]\label{excess2}
Let $G$ be a $(k,g)$-cage of girth $g = 2d \geq 6$ and excess $2$. Then $g=6$, $G$ is a double-cover of $D(k,2)$, and $k \not \equiv 5, 7 \!\! \pmod{8}$.
\end{theorem}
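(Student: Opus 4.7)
The plan is to follow the Biggs–Ito strategy in two main stages: first reduce the girth to $6$ via a spectral multiplicity argument, then recognise the resulting graph as a double cover of a biplane's incidence graph and extract the congruence on $k$ from number-theoretic constraints of Bruck–Ryser–Chowla type.

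First I would invoke Theorem \ref{bigito} to obtain that $G$ is bipartite of diameter $d+1$. A Moore-type count shows that from each vertex $v$ there are exactly $(k-1)^{d-1}+2$ vertices at distance $\geq d$, and since each bipartition class has exactly $M(k,2d)/2+1$ vertices, at most (hence exactly) one vertex $\bar{v}$ lies at distance $d+1$ from $v$. Thus $v \mapsto \bar{v}$ is a fixed-point-free involution $\sigma$, whose permutation matrix I denote by $M$. Using that the distance matrix $A_i$ equals a fixed polynomial $f_i(A)$ for $i \leq d-1$ (girth $2d$ forces unique shortest paths at those distances), I would express $A_d$ and $A_{d+1}$ in terms of $A$ and $M$, leading to an identity of the form
$$p(A) = J + q(A)\,M$$
with polynomials $p,q$ obtained by counting closed and open walks of length up to $2d-1$ and matching them against the forced distance distribution.

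Next I would analyse this identity spectrally. Since $M$ commutes with $A$ (a short lemma worth isolating), the eigenspaces of $A$ decompose into joint eigenspaces on which $M$ acts as $\pm 1$; writing $m_\theta^{+}$ and $m_\theta^{-}$ for the two multiplicities attached to each eigenvalue $\theta$ of $A$, the identity forces $p(\theta) = \pm q(\theta)$ on the non-trivial spectrum, while the trace identities $\sum_\theta \theta^r (m_\theta^{+} + m_\theta^{-}) = \mathrm{tr}(A^r)$ for small $r$, together with non-negativity and integrality of the $m_\theta^{\pm}$, yield a Diophantine system relating the eigenvalues and their multiplicities. A case analysis shows that for $d \geq 4$ this system admits no solution, leaving $d = 3$ and hence $g = 6$.

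Once $g=6$ is established, the unique-antipode structure together with girth $6$ lets one verify that the quotient $G/\sigma$ is the incidence graph of a symmetric $(v,k,2)$-design with $v = 1 + k(k-1)/2$, so $G$ is a double cover of $D(k,2)$. The congruence $k \not\equiv 5, 7 \pmod{8}$ then follows by applying the Bruck–Ryser–Chowla necessary conditions to such a biplane and carrying out a short mod-$8$ case analysis on the parity of $v$ and the diophantine equation $x^2 = (k-2)y^2 \pm 2z^2$. I expect the main obstacle to be the multiplicity argument for $d \geq 4$: translating the polynomial identity into sharp enough Diophantine constraints, and ruling out all admissible eigenvalue patterns simultaneously, is the technical core of the proof and typically requires separating cases on small values of $d$ and handling the potential irrationality of non-trivial eigenvalues by feasibility conditions on their conjugate multiplicities.
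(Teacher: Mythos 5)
The paper itself does not prove this statement --- it quotes it from Biggs and Ito --- so your proposal has to be measured against their argument and against the machinery of Sections 2--4 here. Your first stage is the right route: for excess $2$ every vertex has a unique antipode, $A_{d+1}$ is the permutation matrix of a fixed-point-free involution commuting with $A$, its eigenvalues are $\pm1$, and an identity of the type in Lemma \ref{identity}/Theorem \ref{eigenvalues} gives $H_{d-1}(\theta)=\pm1$ for the eigenvalues $\theta\neq\pm k$; one then eliminates $d\geq4$ through the multiplicity formula and the rationality/conjugacy constraints on the exceptional eigenvalue (exactly the $m(\theta_{2})$-type argument of Sections 3--4, which for $e=2$ works down to $g=8$). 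But in your write-up this decisive step is only asserted (``a case analysis shows that for $d\geq4$ this system admits no solution''), so the reduction to $g=6$, which is the technical heart, is not actually carried out.

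The genuine error is in the last step. The Bruck--Ryser--Chowla conditions applied to the biplane itself cannot yield $k\not\equiv5\pmod{8}$: they constrain the order $n=k-2$, and for $k\equiv5\pmod{8}$ they are satisfiable --- indeed biplanes with $k=13\equiv5\pmod{8}$ exist (the $2$-$(79,13,2)$ designs), so no condition on the design alone excludes this residue. The congruence must be extracted from the covering. Girth $6$ forces every $4$-cycle of $D(k,2)$ to lift to an $8$-cycle, which is equivalent to a $\pm1$-signing $S$ of the biplane's incidence matrix satisfying $SS^{T}=kI_{v}$, where $v=1+k(k-1)/2$. Taking determinants, $k^{v}$ must be a perfect square, so $k$ must be a square whenever $v$ is odd; for $k\equiv5\pmod{8}$ one has $v$ odd while $5$ is not a quadratic residue modulo $8$, which excludes this case. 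For $k\equiv7\pmod{8}$ one has $v\equiv2\pmod{4}$, and the Hasse--Minkowski (BRC-type) condition for such a rational matrix equation requires $k$ to be a sum of two squares, impossible for $k\equiv3\pmod{4}$. (This second residue does also fall to BRC on the biplane, since then $v$ is even and $k-2\equiv5\pmod{8}$ is a nonsquare; but the $k\equiv5\pmod 8$ case genuinely needs the double-cover structure, so your plan as stated does not prove the full congruence.)
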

\begin{theorem}[Jajcayov\' a, Filipovski and Jajcay \cite{ourpaper}]\label{exclude}
Let $ k \geq 6 $ and $ g = 2d > 6 $. No $(k,g)$-graphs of excess $4$ exist for parameters
$k,g$ satisfying at least one of the following conditions:
\begin{itemize}
\item[{\rm 1)}] $g=2p$, with $p\geq5$ a prime number, and $k \not \equiv 0, 1, 2 \!\! \pmod{p}$;
\item[{\rm 2)}] $ g=4\cdot3^{s}$ such that $ s\geq 4 $, and $k$ is divisible by $9$ but not by $ 3^{s-1}$;
\item[{\rm 3)}]  $ g=2p^{2}$, with $p\geq 5$ a prime number, and  $k \not \equiv 0, 1, 2 \!\! \pmod{p}$ and $k$ even;
\item[{\rm 4)}] $ g=4p$, with $p\geq 5 $ a prime number, and  $k \not \equiv 0, 1, 2,3,p-2 \!\! \pmod{p}$;
\item[{\rm 5)}] $g\equiv0\!\! \pmod {16}$, and $k\equiv3\!\! \pmod {g}.$
\end{itemize}
\end{theorem}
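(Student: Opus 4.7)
The plan is to set up the spectrum of a hypothetical $(k,g)$-graph $G$ of excess $4$ and then derive arithmetic obstructions to the existence of that spectrum, case by case. By Theorem~\ref{bigito}, $G$ is bipartite of diameter $d+1$, so the adjacency matrix $A$ has $\pm k$ as simple eigenvalues. Writing $A_i$ for the distance-$i$ matrix of $G$, the ``almost distance-regular'' recurrences $A A_i = A_{i-1} + (k-1) A_{i+1}$ hold for $1 \leq i \leq d-1$, while $A A_d = A_{d-1} + (k-1) A_{d+1} + X$ contains an excess correction $X$ supported on $A_d$ and $A_{d+1}$. This expresses each $A_i$ with $i \leq d$ as a polynomial in $A$ and yields a single identity of the form $G_d(A) = \beta(A)\cdot A_{d+1}$ for explicit polynomials $G_d$ and $\beta$ that depend on $k$ and on the excess profile.

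For any eigenvalue $\lambda \neq \pm k$ of $A$, evaluating this identity on a $\lambda$-eigenvector shows that $A_{d+1}$ acts on the $\lambda$-eigenspace as an explicit scalar, and bipartiteness forces $\lambda\mapsto -\lambda$ to preserve the eigenvalue set. Combined with the trace identities $\operatorname{tr}(A^{2j}) = \sum_\lambda m_\lambda \lambda^{2j}$ for small $j$, this pins down the multiplicities $m_\lambda$ as rational functions of $k$ and of the roots of $G_d$. Requiring each $m_\lambda$ to be a non-negative integer then translates into strong divisibility conditions on symmetric functions of those roots.

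Each of the items (1)--(5) is then obtained by reducing the integrality conditions modulo a prime adapted to the factorisation of $g$. The key input is the well-understood reduction of Chebyshev/Dickson-type polynomials modulo primes and prime powers. For condition~(1), with $d = p$ prime, the mod-$p$ reduction of the multiplicity formula forces $k$ to lie in one of the residue classes $0, 1, 2 \pmod p$, so the hypothesis produces a contradiction. Condition~(2) iterates the same analysis modulo successive powers of $3$; conditions~(3) and~(4) combine a prime-$p$ reduction with an additional parity input coming from the bipartite trace constraints; condition~(5) is the case where a $2$-adic obstruction alone suffices.

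The main obstacle is the second step: extracting closed-form expressions for the multiplicities when $G_d$ is Chebyshev-like but shifted by the excess-$4$ corrections, and then, in the third step, locating for each subcase the polynomial identity modulo $p$ whose roots in $\mathbb{F}_p$ coincide precisely with the forbidden residues of $k$. The framework of \cite{bigito} and \cite{pineda} handles the algebraic bookkeeping, but each arithmetic regime --- particularly $g = 4\cdot 3^{s}$ with its higher-order cancellation --- demands a dedicated modular computation.
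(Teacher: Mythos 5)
This statement is not proved in the paper at all: it is Theorem~\ref{exclude}, imported verbatim from \cite{ourpaper} and used only as motivation, so there is no internal proof to compare against. Judged on its own merits, your proposal is a roadmap in the right spirit (bipartiteness and diameter $d+1$ via Theorem~\ref{bigito}, a polynomial identity linking $A$ and $A_{d+1}$, eigenvalue/multiplicity constraints, then reduction modulo primes adapted to $g$), which is indeed the flavour of the Biggs--Ito/Pineda-Villavicencio machinery that both this paper and \cite{ourpaper} rely on. But as a proof it has two genuine gaps. First, the algebraic backbone is asserted rather than established: for a non-distance-regular graph, $A_{d+1}$ does not automatically act as a scalar on each eigenspace of $A$, and the correct substitute (an identity of the type $kJ=(A+kI)(H_{d-1}(A)+A_{d+1})$, proved here in Lemmas~\ref{vazna}--\ref{identity} by a careful count of paths to the excess vertices) requires the combinatorial analysis of the excess-$4$ configuration --- in particular that every vertex has exactly two vertices at distance $d+1$, so $A_{d+1}$ is the adjacency matrix of a disjoint union of cycles with eigenvalues $2\cos(2\pi j/m)$. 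Your ``excess correction $X$ supported on $A_d$ and $A_{d+1}$'' glosses over exactly the step that makes the eigenvalue relation $H_{d-1}(\theta)=-\lambda$ available.

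Second, and more decisively, the entire content of items (1)--(5) lies in the case-by-case arithmetic: one must actually carry out the reduction of $H_{d-1}(x)+\lambda$ (a Dickson-type polynomial with parameter $k-1$) modulo $p$, $3^{s}$, or $2$-adically, in the relevant cyclotomic setting, and show that solvability forces $k$ into the listed residue classes. Your proposal states the conclusions of these computations (``the mod-$p$ reduction forces $k\equiv 0,1,2 \pmod p$'') and then explicitly defers them (``demands a dedicated modular computation''). Asserting the output of the number-theoretic step without performing it means the five conditions of the theorem are never actually derived; what you have is a plausible plan, not a proof.
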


Let $k\geq4, g=2d\geq 6$ and let $G$ be a $(k,g)$-cage of excess $e\leq k-2$ and order $n$. Due to Theorem 1.1, we conclude that $G$ is a bipartite graph of diameter $d+1.$ Let $A$ be its adjacency matrix. For the integers $i$ with $0\leq i\leq d+1$, the \emph{$i$-distance matrix $A_{i}$} of $G$ is an $n\times n$ matrix such that the entry in position $(u,v)$ is $1$ if the distance between the vertices $u$ and $v$ is $i$, and zero otherwise.
 Using the spectral considerations as in \cite{cages}, in Section 2 we prove that the eigenvalues of $A(A_{1})$, other than $\pm k$, are the roots of the polynomial $H_{d-1}(x)+\lambda$; Theorem 2.3. Here, $H_{d-1}$ is the Dickson polynomial of the second kind with parameter $k-1$ and degree $d-1$, and $\lambda$ is an eigenvalue of the distance matrix $A_{d+1}$.

A graph of diameter $d$ is said to be \emph{antipodal} if, for any vertices $u, v, w$ such that $d(u,v)=d$ and $d(u, w)=d$, it follows that $d(v, w)=d$ or $v=w,$ (see \cite{L}).\\
Among the trivially antipodal graphs let us mention the $n$-dimensional cubes $Q_{n}$. These graphs are bipartite and have the antipodal property, since every vertex of $Q_{n}$ has a unique vertex at maximum distance from it.
Also, for $n\geq2$, the complete bipartite graph $K_{n,n}$ is antipodal. Here the antipodal partition is the same as the bipartition.
The dodecahedron is an example of trivially antipodal, but not bipartite graph. Examples of graphs which are non-trivially antipodal and not bipartite are the complete tripartite graphs $K_{n, n, n}$, which have diameter $2$, and the line graph of Petersen's graph, which has diameter $3$.
Motivated by Theorem 1.4, in this paper we address the question of the existence of the antipodal $(k,g)$-cages of even girth and excess $e\leq k-2.$ Employing the methodology used in \cite{Ban&Ito}, \cite{bigito} and \cite{pineda}, we prove the non-existence of the antipodal $(k,g)$-cages of excess $e$, with $k\geq e+2\geq4$ and $g=2d\geq14;$ Theorem 4.2.


 \begin{theorem}[Bannai and Ito \cite{Ban&Ito}] For $d\geq3,$ there exist no antipodal regular graphs with diameter $d+1$ and girth $2d+1.$
 \end{theorem}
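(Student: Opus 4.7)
I would argue by contradiction. Suppose $G$ is an antipodal $k$-regular graph of diameter $d+1$ and girth $2d+1$ with $d\geq 3$. Since the girth is $2d+1$, the ball of radius $d$ around every vertex is a tree, so the number of vertices at distance $i$ from any given vertex is $k(k-1)^{i-1}$ for $1\leq i\leq d$, and the intersection numbers at the first $d$ levels match those of a Moore graph ($c_i=1$ for $1\leq i\leq d$ and $b_i=k-1$ for $1\leq i\leq d-1$). In particular $A_i=F_i(A)$ for $0\leq i\leq d$, where the polynomials $F_i$ satisfy the Dickson-type recurrence $F_{i+1}(x)=xF_i(x)-(k-1)F_{i-1}(x)$. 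A short check shows that the antipodal relation is an equivalence, and regularity forces every antipodal class to have common size $r+1$, giving $n=1+k+k(k-1)+\cdots+k(k-1)^{d-1}+r$.

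Next I would exploit antipodality spectrally. From $I+A_1+\cdots+A_{d+1}=J$ one obtains $A_{d+1}=J-P(A)$, where $P(x):=1+F_1(x)+\cdots+F_d(x)$ is a polynomial of degree $d$. The block decomposition of $A_{d+1}$ into copies of $J_{r+1}-I_{r+1}$ shows that its only eigenvalues are $r$ and $-1$. Because $J$ vanishes on every $A$-eigenspace orthogonal to the all-ones vector, each non-principal eigenvalue $\theta$ of $A$ must satisfy $-P(\theta)\in\{r,-1\}$, so $\theta$ is a root of one of the two degree-$d$ polynomials $P(x)+r$ or $P(x)-1$. This splits the non-principal spectrum into two Galois-closed families whose multiplicities $m_+$, $m_-$ are constrained by the trace identities $\sum m_\theta=n$, $\sum m_\theta\theta=0$, $\sum m_\theta\theta^2=nk$, and more generally $\sum m_\theta\theta^i=\mathrm{tr}(A^i)$ for $i<2d+1$; every right-hand side is computable from the tree structure of the ball of radius $d$.

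The final step is a Damerell-style arithmetic analysis. Since $\deg(P(x)+r)=\deg(P(x)-1)=d\geq 3$, in the generic case where both polynomials are irreducible over $\mathbb{Q}$ all roots in a family share a common positive-integer multiplicity, and the trace identities collapse to an over-determined linear system in $m_+$ and $m_-$ whose coefficients are explicit polynomials in $k,d,r$; combined with the divisibility $(r+1)\mid n$ forced by the antipodal partition, this system should admit no positive-integer solution once $d\geq 3$. The principal obstacle I expect is the possibility that $P(x)+r$ or $P(x)-1$ factors over $\mathbb{Q}$: in that case one must peel off each Galois orbit separately and track its own multiplicity, which triggers a delicate case analysis pointing to a short list of highly structured candidate graphs (strongly regular or small Moore-like) that then have to be ruled out individually. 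Apart from this reducibility subtlety, the argument is a direct analog of the classical Bannai--Ito multiplicity analysis for Moore graphs, where the integrality of multiplicities of Galois-conjugate eigenvalues is the decisive constraint.
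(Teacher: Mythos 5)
Your setup is sound and is essentially the standard opening (it is also the framework this paper adapts to even girth in Sections 2--3): with girth $2d+1$ the balls of radius $d$ are trees, so every vertex has exactly $r=n-M(k,2d+1)$ antipodes, the antipodal relation partitions $V(G)$ into classes of size $r+1$, $A_{d+1}=J-P(A)$ with $P=G_d$, the spectrum of $A_{d+1}$ is $\{r,-1\}$, and every eigenvalue $\theta\neq k$ is a root of $P(x)+r$ or $P(x)-1$. The problem is the endgame, which is where the theorem actually lives. In your ``generic'' (both polynomials irreducible) case you only assert that the trace identities ``should admit no positive-integer solution''; you never exhibit the contradiction. (One is available and is easy: under irreducibility all $2d$ roots occur with just two multiplicities $m_+,m_-$, the sum of the roots of each polynomial is $-1$ because $G_d$ is monic with $x^{d-1}$-coefficient $1$, so $\mathrm{tr}(A)=0$ gives $m_++m_-=k$ and hence $n=1+dk<M(k,2d+1)$ for $k\geq3$ --- but this is your step to carry out, and it is absent.) Much more seriously, the reducible case is not a peripheral ``subtlety'': nothing forces $P(x)+r$ or $P(x)-1$ to be irreducible (for Moore-type parameters these polynomials do factor), and your claim that reducibility leads to ``a short list of highly structured candidate graphs'' to be ruled out individually is unsupported --- no finite list is produced by anything in your argument, and integrality of orbit-wise multiplicities alone yields no contradiction.

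The missing idea is the Bannai--Ito multiplicity-comparison argument, which makes no irreducibility assumption and which this paper reproduces for its even-girth analogue in Sections 3 and 4. One derives a closed formula for $m(\theta)$ of the type of equation (4), writes each root as $\theta_i=-2s\cos\phi_i$ with $s=\sqrt{k-1}$, localizes the angles $\phi_i$ (as in Lemma 3.2), expresses $m(\theta_i)$ as a product of an even, concave function $f(\cos\phi_i)$ and a monotone function $g(\cos\phi_i)$ (Lemmas 3.4--3.6), and concludes that a specific root in each family has strictly smaller multiplicity than all others except its mirror image $-\theta_i$ (Lemma 4.1). Since algebraic conjugates of an eigenvalue of $A$ share its multiplicity, that root's Galois orbit has size at most two, so its square is a rational integer; the proof then closes by squeezing an integer strictly between $0$ and $1$, in the style of $0<\lambda_2^2-\mu_2^2<1$ in Theorem 4.2. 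Without this step (or a genuine substitute valid for arbitrary factorizations of $P(x)+r$ and $P(x)-1$), your proposal does not prove the theorem. Note also that the paper itself does not prove this statement; it cites Bannai--Ito, whose proof follows exactly the mechanism just described.
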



\section{On $(k,g)$-cages of even girth and excess $e\leq k-2$}

\quad Let $k, g, d$ and $e$ be positive integers such that $k\geq e+2$ and $g=2d\geq6$. Let $G$ be a $(k,g)$-cage of excess $e$; Theorem 1.1 asserts that $e$ is even and $G$ is a bipartite graph of diameter $d+1.$ Let $f=\{u,
v\}$ be an arbitrary edge of $G$. Let $\T_u$ be the subgraph of $G$ induced by the set of vertices $x \in V(G)$ such
that $d(u,x)\leq \frac{g-2}{2}$ and $d(v,x) = d(u,x)+1$. It is easy to see that  $\T_u$ is a tree of depth $\frac{g-2}{2}$ rooted at $u$. In the same
way we can define the tree $\T_v$ to be the subgraph of $G$ induced by the set of vertices $x \in V(G)$ such that $d(v,x)\leq \frac{g-2}{2}$ and $d(u,x) =
d(v,x)+1$. Since $G$ is of girth $g=2d$, the trees ${\mathcal T}_u$ and ${\mathcal T}_v$ are disjoint. Let $\T_{uv}$ be the union of the trees
${\mathcal T}_u$ and ${\mathcal T}_v$ and the edge $f$. We call the graph $\T_{uv}$ a \emph{Moore tree of $G$ rooted at $f$}.
The graph $G$ must contain $e$ additional vertices $ w_1,w_2,...,w_{e-1},w_e$, which do not belong to $\T_{uv}$, that is, $d(w_{i},u)>\frac{g-2}{2}$ and $d(w_{i},v)>\frac{g-2}{2}$, for each $1\leq i\leq e.$
We call these vertices {\em the excess vertices with respect to $f$} and denote this set
$ X_f = \{ w_1,w_2,...,w_{e-1},w_e \} $; we call the edges not contained in the Moore tree
of $G$ {\em horizontal edges}.
Since $G$ is a bipartite graph, it contains no odd cycle; consequently there exists no edge between the excess vertices of the same partite set. Moreover, in order to balance the Moore tree $\T_{uv}$ and paring out the horizontal edges of $G$, we easily observe that half ($\frac{e}{2}$) of the excess vertices belong to the first, and the other half to the second partite set of $G.$ It implies that for each vertex of $V(G)$ there exist exactly $\frac{e}{2}$ vertices at distance $d+1$ from it.\\
In order to study the spectral properties of $G$, we define the following polynomials:


\begin{figure}[!h]
\begin{center}
  \includegraphics[width=\textwidth]{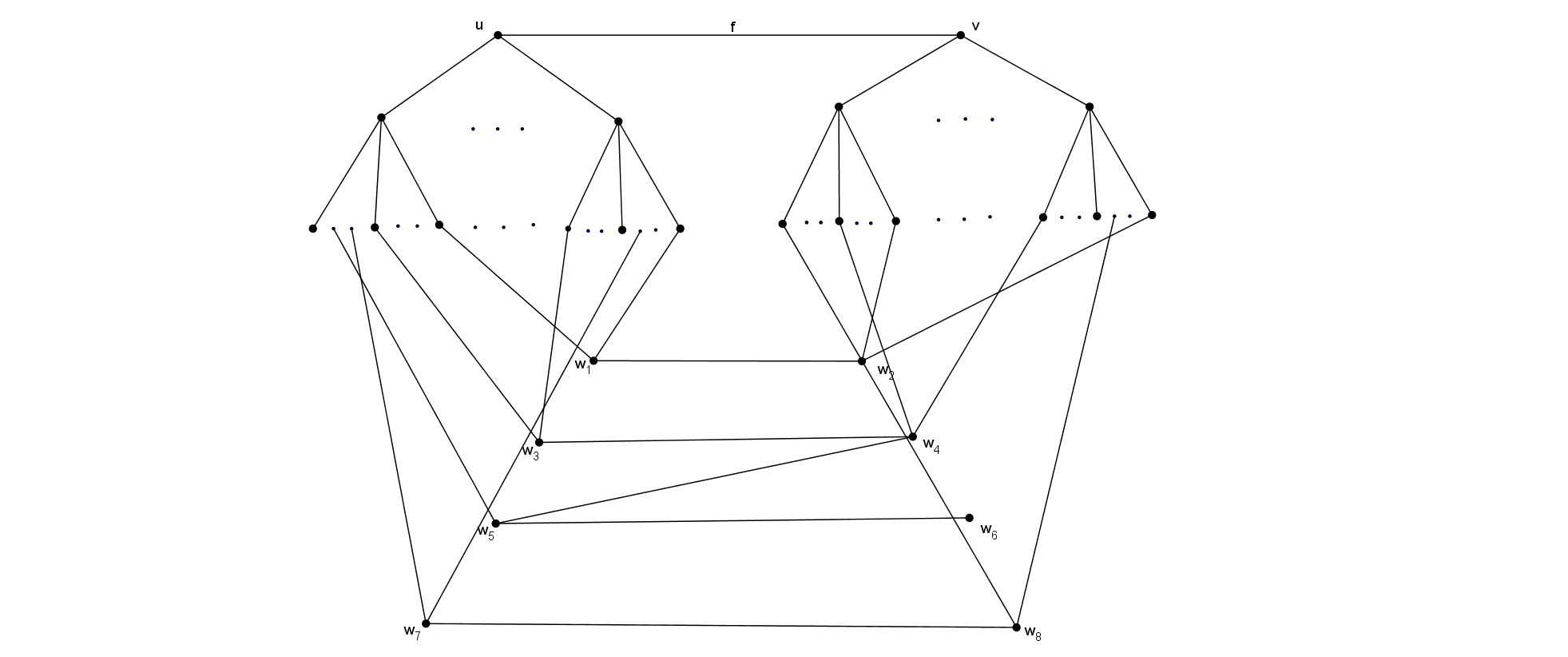}
  \end{center}
  \caption{The Moore tree and some of the horizontal edges in a potential (k, 6)-graph of
excess 8
}\label{picture}
\end{figure}

\begin{center}
$G_{0}(x)=1, \; G_{1}(x)=x+1;$
\end{center}
\begin{center}
$F_{0}(x)=1,\; F_{1}(x)=x,\; F_{2}(x)=x^{2}-k;$
\end{center}
\begin{center}
$H_{-2}(x)=-\frac{1}{k-1},\; H_{-1}(x)=0,\; H_{0}(x)=1,\; H_{1}(x)=x;$
\end{center}

\begin{equation}\label{2}
P_{i+1}(x)=xP_{i}(x)-(k-1)P_{i-1}(x) \mbox{ for }
\left\{
\begin{array}{lc} i\geq1, & \mbox{ if } P_{i}=G_{i}, \\
i\geq2, & \mbox{ if } P_{i}=F_{i}, \\
i\geq1, & \mbox{ if } P_{i}=H_{i}.
\end{array}
\right.
\end{equation}


The above defined polynomials have a close connection to the properties of a graph $G$. Namely, for $l < g$, the element $(F_{l}(A))_{x,y}$ counts the number of paths of length $l$ joining vertices $x$ and $y$ of $G$. Moreover, $G_{l}(A)$ counts the number of paths of length at most $l$ joining pairs of vertices in $G$. For more information about these polynomials see \cite{singleton}.

The next lemma is a generalization on Lemma 5 from \cite{cages}, where it is considered cages of even girth and excess $4$.
\begin{lemma}\label{vazna} Let $k\geq e+2 $ and $g=2d\geq6,$ and let $G$ be a $(k,g)$-cage of excess $e$. If $A$ is the adjacency matrix of $G$, then
$$F_{d}(A)=kA_{d}-A A_{d+1}.$$
\end{lemma}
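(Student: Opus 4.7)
\textit{Plan.} I would prove the identity entry by entry: fix $x, y \in V(G)$ and compare $(F_d(A))_{x,y}$ with $(kA_d - AA_{d+1})_{x,y}$. The paragraph preceding the lemma states that for $l < g = 2d$ the entry $(F_l(A))_{x,y}$ counts paths of length $l$ between $x$ and $y$, so $(F_d(A))_{x,y}$ is the number of length-$d$ paths from $x$ to $y$. A girth argument shows that such a path can exist only when $d(x,y) = d$: since $g = 2d$, the ball $B_{d-1}(x) = \{v : d(x,v) \le d-1\}$ induces a tree (any non-BFS edge inside it would produce a cycle of length at most $2d - 1 < g$), and any length-$d$ path from $x$ to a vertex $y$ with $d(x,y) < d$ must lie entirely inside $B_{d-1}(x)$, hence would have to coincide with the unique tree path from $x$ to $y$, whose length is $d(x,y) \neq d$, a contradiction. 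Consequently every length-$d$ path from $x$ to $y$ is a geodesic, and is determined uniquely by its first edge $xz$: the length-$(d-1)$ suffix from $z$ to $y$ is itself a geodesic, and geodesics of length at most $d-1$ are unique (two distinct such geodesics would form a cycle of length at most $2(d-1) < g$). Therefore $(F_d(A))_{x,y}$ equals the number $a(x,y)$ of neighbors $z$ of $x$ with $d(z,y) = d-1$, and vanishes whenever $d(x,y) \neq d$.

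For the right-hand side I unfold $(AA_{d+1})_{x,y} = \sum_z A_{x,z}(A_{d+1})_{z,y}$, which is the number of neighbors of $x$ at distance $d+1$ from $y$. Two features of $G$ drive the case analysis: bipartiteness forces $d(z,y)$ and $d(x,y)$ to have opposite parity whenever $xz$ is an edge, and the diameter of $G$ equals $d+1$. If $d(x,y) \le d-1$, the triangle inequality gives $d(z,y) \le d$, so the count is $0$. If $d(x,y) = d+1$, parity together with the diameter bound forces every neighbor of $x$ to be at distance exactly $d$ from $y$, so the count is again $0$. If $d(x,y) = d$, parity constrains each of the $k$ neighbors of $x$ to be at distance $d-1$ or $d+1$ from $y$, and the count is therefore $k - a(x,y)$. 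Combining, $(kA_d - AA_{d+1})_{x,y}$ vanishes whenever $d(x,y) \neq d$ (matching $(F_d(A))_{x,y} = 0$), and when $d(x,y) = d$ it evaluates to $k - (k - a(x,y)) = a(x,y) = (F_d(A))_{x,y}$.

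I expect the most delicate step to be the girth reduction in the first paragraph, namely the verification that $B_{d-1}(x)$ induces a tree in every graph of girth $2d$, thereby ruling out length-$d$ paths between vertices at distance less than $d$. This is standard but worth recording carefully together with the uniqueness of short geodesics, since these are exactly what power the bijection between length-$d$ paths from $x$ to $y$ and the neighbors of $x$ at distance $d-1$ from $y$ in the case $d(x,y) = d$; the remainder of the proof is bookkeeping driven by bipartite parity and the diameter bound.
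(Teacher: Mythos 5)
Your proposal is correct, and it reaches the identity by a cleaner route than the paper. Both arguments are entrywise comparisons of $F_d(A)$, $kA_d$ and $AA_{d+1}$, but the paper's proof fixes a base edge $f=\{u,v\}$, builds the Moore tree $\T_{uv}$, and tracks how the excess vertices and their horizontal edges meet the leaves (including a case split on whether excess vertices share neighbours among the leaves), reading off the entries $k$, $k-1$, $l_i$ of $F_d(A)$ and the matching entries of $AA_{d+1}$ from that picture. You bypass the Moore tree entirely: girth $2d$ forces every length-$d$ path to be a geodesic, so $(F_d(A))_{x,y}$ equals the number $a(x,y)$ of neighbours of $x$ at distance $d-1$ from $y$ when $d(x,y)=d$ and vanishes otherwise, while bipartiteness, $k$-regularity and the diameter bound $d+1$ (supplied by Theorem 1.1, exactly as in the paper's setup) force each neighbour of $x$ to be at distance $d-1$ or $d+1$ from $y$ when $d(x,y)=d$, giving $(AA_{d+1})_{x,y}=k-a(x,y)$; the remaining cases are immediate. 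What your version buys: it is uniform (no case distinction on shared neighbours of excess vertices, which the paper only sketches), and it shows the identity holds in any $k$-regular bipartite graph of girth $2d$ and diameter at most $d+1$, not just in cages of small excess. What the paper's version buys is explicit structural information about where the defective entries sit relative to the excess vertices, though that extra detail is not used later. The only steps you should write out in full are the ones you flag yourself: that the ball $B_{d-1}(x)$ induces a tree (equivalently, that two distinct $x$--$y$ paths of total length at most $2d-1$ would close a cycle shorter than $g$), the uniqueness of geodesics of length at most $d-1$, and the small check that $x$ does not lie on the chosen geodesic from $z$ to $y$ (else $d(x,y)\le d-1$), so the concatenation really is a path and the bijection with first edges is exact.
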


\begin{proof} Let $f=\{u,v\}$ be a base edge of the Moore tree and let $f_{1}=\{w_{1}, w_{2}\}, f_{2}=\{w_{3}, w_{4}\},...,f_{\frac{e}{2}}=\{w_{e-1}, w_{e}\}$ be the edges of the subgraph induced by $X_f$. Also, let us assume that $d(u,w_{1})=d(u,w_{3})=...=d(u,w_{e-1})=d$ and
$d(u,w_{2})=d(u,w_{4})=...=d(u,w_{e})=d+1.$ Let $l_{i}$ be the number of edges between $w_{i}$ and the leaves of ${\mathcal T}_u$ and ${\mathcal T}_v$, where $1\leq i\leq e.$ We consider the case when the excess vertices do not share common neighbour among the leaves of ${\mathcal T}_u$ and ${\mathcal T}_v$. The opposite case one can prove in a similar way. By the definition of $F_{i}(x)$, we have $(F_{d}(A))_{u,w_{i}}=l_{i}$, for each odd $i$, $1\leq i\leq e-1.$ Considering the vertices at distance $d$ from $u$, there are also the $(k-1)^{d-1}$ leaves of ${\mathcal T}_v$. For $l_{2}+l_{4}+...+l_{e}$ of these vertices, there exist $k-1$ paths of length $d$ from $u$ to them. Namely, they are the vertices adjacent to $w_{2},w_{4},...,w_{e-2}$ or $w_{e}.$ For all the other leaves, there are $k$ paths between them and $u$. Thus, $(F_{d}(A))_{u,s}=0$ if $d(u,s)\neq d$, $(F_{d}(A))_{u,s}=k$ if $s$ is a leaf of ${\mathcal T}_v$ and not adjacent to $w_{2}, w_{4},...,w_{e}$, $(F_{d}(A))_{u,s}=k-1$ if $s$ is a leaf of ${\mathcal T}_v$ and adjacent to one of $w_{2}, w_{4},...,w_{e}$, and $(F_{d}(A))_{u,w_{i}}=l_{i}$, for each odd $i$; $1\leq i\leq e-1.$  For the matrix $kA_d$ we have $(kA_{d})_{u,s}=k$ if $d(u,s)=d$ and $(kA_{d})_{u,s}=0$ if $d(u,s)\neq d.$
Now, let $s$ be a vertex of $G$ such that $d(u,s)=d$ and $s$ is adjacent to one of $w_{2}, w_{4},...,w_{e}.$ If $s$ is a vertex among the vertices $w_{1}, w_{3},...,w_{e-1}$, then it is easy to see that $(A A_{d+1})_{u,s}=k-l_{i}.$ On the other hand, since $s$ is adjacent to ${\mathcal T}_u$ through $k-2$ different horizontal edges, it follows that, between the $k-1$ branches of ${\mathcal T}_u$, there exists one sub-branch that is not adjacent to $s$ through a horizontal edge. Let $s_{1}$ be the root of that sub-branch. Then, $d(s,s_{1})=d+1$ and $d(u,s_{1})=1$, which implies $(A)_{u,s_{1}}=1$ and $(A_{d+1})_{s_{1}, s}=1.$
Let $s_{i}$, $2\leq i\leq \frac{e}{2}$ be the remaining vertices at distance $d+1$ from $s$. Because all neighbours of $u$, except $s_{1}$, are at distance smaller than $d+1$ from $s$, we have $(A)_{u,s_{i}}=0$ and $(A_{d+1})_{s_{i}, s}=1,$ for $2\leq i\leq \frac{e}{2}.$ Thus $(AA_{d+1})_{u,s}=1.$
 If $s$ is a vertex of $G$ such that $d(u,s)=d$ and $s$ is not adjacent to $w_{2}, w_{4},...,w_{e}$, then the distance between $s$ and the neighbours of $u$ is $d-1$. In this case, $(AA_{d+1})_{u,s}=0.$
If $d(u,s)\neq d$, then the distance between $s$ and the neighbours of $u$ is different from $d+1$, and therefore $(AA_{d+1})_{u,s}=0. $
The required identity follows from summing up the above conclusions.
\qed
\end{proof}
\medskip

 Based on the previous lemma and the properties of the polynomials $G_{i}$, $H_{i}$ and $F_{i}$, we obtain the next two results. Theorem 2.3 is the main result in this section; it gives a relationship between the eigenvalues of the matrices $A$ and $A_{d+1}$.
 We omitted their proofs because they follow analoglously like in Lemma 6 and Theorem 7 from \cite{cages}.

\begin{lemma}\label{identity} Let $k\geq e+2\geq4$ and $g=2d\geq6$, and let $G$ be a $(k,g)$-cage of excess $e$. If $A$ is the adjacency matrix of $G$ and $J$ is the all-ones matrix, then
$$kJ=(A+kI)(H_{d-1}(A)+A_{d+1}).$$
\end{lemma}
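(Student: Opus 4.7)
The plan is to reduce the claimed matrix identity to a polynomial identity in the variable standing for $A$, and then verify that polynomial identity directly from the recurrences (\ref{2}). Since $G$ has diameter $d+1$, every pair of vertices lies at some distance in $\{0,1,\ldots,d+1\}$, so $J=\sum_{i=0}^{d+1}A_i$. Because $g=2d$, there is at most one walk of length $i$ between any two vertices whenever $i\le d-1$; this is precisely the condition that makes $F_i(A)=A_i$ hold for $0\le i\le d-1$. An easy induction on (\ref{2}) yields $G_{d-1}(x)=\sum_{i=0}^{d-1}F_i(x)$, and hence $G_{d-1}(A)=\sum_{i=0}^{d-1}A_i$. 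Multiplying the decomposition of $J$ by $k$ and isolating the top two distance classes gives
\[ kJ \;=\; kG_{d-1}(A) + kA_d + kA_{d+1}. \]

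Next I would apply Lemma \ref{vazna} in the form $kA_d = F_d(A)+AA_{d+1}$, which turns the display above into
\[ kJ \;=\; \bigl[\,kG_{d-1}(A)+F_d(A)\,\bigr] + (A+kI)A_{d+1}. \]
Comparing this with the desired $(A+kI)(H_{d-1}(A)+A_{d+1})=(A+kI)H_{d-1}(A)+(A+kI)A_{d+1}$, the lemma reduces to the purely polynomial identity
\[ (x+k)\,H_{d-1}(x) \;=\; F_d(x) + kG_{d-1}(x). \]
To establish this identity I would first record the two auxiliary relations
\[ G_i(x) = H_i(x) + H_{i-1}(x) \quad (i\ge 0,\ H_{-1}=0), \qquad F_i(x) = H_i(x) - H_{i-2}(x) \quad (i\ge 2), \]
both of which are short inductions from (\ref{2}). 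Substituting these expressions for $G_{d-1}$ and $F_d$ in the right-hand side gives $H_d(x)+kH_{d-1}(x)+(k-1)H_{d-2}(x)$, and now the $H$-recurrence $H_d(x)=xH_{d-1}(x)-(k-1)H_{d-2}(x)$ collapses this sum to $(x+k)H_{d-1}(x)$, as needed. The base case $d=1$ reads $(x+k)\cdot 1 = x+k\cdot 1$ and is immediate.

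The main obstacle is purely bookkeeping: one must keep track of the three polynomial families simultaneously and remember that the $F$-recurrence in (\ref{2}) starts only at $i\ge 2$ (since $F_2(x)=x^2-k$ rather than the value $x^2-k+1$ that the recurrence would produce from $F_0,F_1$), whereas the $G$- and $H$-recurrences start at $i\ge 1$. Once the auxiliary relations $G_i=H_i+H_{i-1}$ and $F_i=H_i-H_{i-2}$ are in hand, no combinatorial content beyond Lemma \ref{vazna} and the identity $F_i(A)=A_i$ for $i\le d-1$ is required, and the remaining polynomial manipulation occupies only a few lines.
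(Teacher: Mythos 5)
Your proof is correct and is essentially the argument the paper intends: the paper omits the proof of Lemma \ref{identity}, deferring to Lemma 6 of \cite{cages}, which proceeds exactly as you do, via $J=\sum_{i=0}^{d+1}A_i$, $G_{d-1}(A)=\sum_{i=0}^{d-1}A_i$, the identity $kA_d=F_d(A)+AA_{d+1}$ from Lemma \ref{vazna}, and the polynomial identity $(x+k)H_{d-1}(x)=kG_{d-1}(x)+F_d(x)$ obtained from $G_i=H_i+H_{i-1}$, $F_i=H_i-H_{i-2}$ and the three-term recurrence. One wording nitpick only: for $i\le d-1$ what $F_i(A)$ counts, and what girth $2d$ forces to be unique, are \emph{paths} (non-backtracking walks) of length $i$ rather than arbitrary walks, which is precisely the standard fact $F_i(A)=A_i$ that you invoke.
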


\begin{theorem} \label{eigenvalues}
If $\theta(\neq\pm k)$ is an eigenvalue of $A$, then
$$H_{d-1}(\theta)=-\lambda,$$
where $\lambda$ is an eigenvalue of $A_{d+1}$.
\end{theorem}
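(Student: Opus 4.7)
\medskip

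The plan is to deduce Theorem~\ref{eigenvalues} directly from the matrix identity $kJ=(A+kI)(H_{d-1}(A)+A_{d+1})$ of Lemma~\ref{identity}, by evaluating both sides on an appropriate eigenvector of $A$. The key structural input is that $G$ is connected, bipartite and $k$-regular of diameter $d+1$, so the eigenvalue $k$ is simple with eigenvector $\mathbf{1}$, and $-k$ is simple with eigenvector $\chi$, the $\pm1$-bipartition vector.

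I would take an eigenvector $v$ of $A$ with eigenvalue $\theta\neq\pm k$. Because $A$ is symmetric, $v$ is orthogonal to both $\mathbf{1}$ and $\chi$; in particular $Jv=0$. Substituting into Lemma~\ref{identity} yields
\begin{equation*}
(A+kI)\bigl(H_{d-1}(A)+A_{d+1}\bigr)v=kJv=0,
\end{equation*}
so the vector $w:=\bigl(H_{d-1}(A)+A_{d+1}\bigr)v$ lies in $\ker(A+kI)=\operatorname{span}(\chi)$ and must equal $c\chi$ for some scalar $c$.

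To force $c=0$ I would take the inner product with $\chi$ and transfer the symmetric matrices onto $\chi$:
\begin{equation*}
c\|\chi\|^{2}=\langle\chi,w\rangle=\bigl\langle\bigl(H_{d-1}(A)+A_{d+1}\bigr)\chi,\,v\bigr\rangle.
\end{equation*}
Here $H_{d-1}(A)\chi=H_{d-1}(-k)\chi$ is automatic, while the observation recorded just before Lemma~\ref{vazna} that each vertex has exactly $e/2$ vertices at distance $d+1$ from it, together with the fact that in a bipartite graph all such vertices lie in a fixed partite class determined by the parity of $d+1$, gives $A_{d+1}\chi=\pm(e/2)\chi$. Both contributions on the right-hand side are therefore scalar multiples of $\langle\chi,v\rangle=0$, so $c=0$.

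Consequently $\bigl(H_{d-1}(A)+A_{d+1}\bigr)v=0$, i.e.\ $A_{d+1}v=-H_{d-1}(\theta)\,v$, showing that $v$ is simultaneously an eigenvector of $A_{d+1}$ with eigenvalue $\lambda=-H_{d-1}(\theta)$, which rearranges to the desired $H_{d-1}(\theta)=-\lambda$. The argument is short; the only point requiring a little care is verifying that $\chi$ is an eigenvector of $A_{d+1}$, and this is already essentially handled by the distance-$(d+1)$ counting established at the beginning of the section, so I do not anticipate a substantial obstacle.
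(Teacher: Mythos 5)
Your proof is correct and is essentially the argument the paper intends: the paper omits the proof of Theorem~\ref{eigenvalues}, deferring to Lemma~6 and Theorem~7 of \cite{cages}, where one likewise applies the identity of Lemma~\ref{identity} to an eigenvector $v$ with $Jv=0$ and uses connectivity and bipartiteness to control $\ker(A+kI)$. Your extra care in showing the $\chi$-component vanishes (via $A_{d+1}\chi=\pm\frac{e}{2}\chi$ and symmetry) is exactly the right way to close that step, so no gap remains.
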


\section{Spectral analysis of the antipodal cages of even girth and small excess}

\quad In this section we study the spectral properties of the antipodal $(k,g)$-cages of even girth $g=2d\geq 6$ and excess at most $k-2$. Let $G$ be such graph, $A$ be its adjacency matrix and let $n$ be the order of $G$. Recall, $G$ is a bipartite graph of diameter $d+1$. Let $V_{1}$ and $V_{2}$ be the partitions of $G$. If $d$ is an even number, then any two vertices of $V(G)$ at distance $d+1$ belong to a different partite set. Clearly, this case is not possible considering antipodal bipartite graphs. Therefore, for the rest of the paper we assume $d$ odd.
Since for each vertex $u \in V(G)$ there exist exactly $\frac{e}{2}$ vertices at diameter distance $d+1$, (they are the excess vertices of the same partite set), we observe that the antipodal graph of $G$ is a disjoint union of $K_{\frac{e}{2}+1}$-complete graphs, and consequently, the distance matrix $A_{d+1}$ is an adjacency matrix of a disjoint union of $K_{\frac{e}{2}+1}$-complete graphs. Let $c$ be the number of such complete graphs. Obviously $c=\frac{2n}{e+2}.$  The spectrum of the disjoint union of $c$ complete graphs of order $\frac{e}{2}+1$ is known and determined by $\{(\frac{e}{2})^{c}, (-1)^{n-c}\}$ (see Propos. 6 in \cite{vandam}). Applying this result in Theorem \ref{eigenvalues}, we are in a position to determine the spectrum of $A$.

\begin{theorem} \label{roots}
If $\theta(\neq\pm k)$ is an eigenvalue of $A$, then
\begin{equation}\label{eq1}
H_{d-1}(\theta)-\epsilon=0,
\end{equation}
where $\epsilon=-\frac{e}{2}, 1$.
\end{theorem}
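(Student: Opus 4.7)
The plan is to combine Theorem 2.3 with the explicit description of the distance matrix $A_{d+1}$ that has just been derived in the opening paragraph of Section 3. Theorem 2.3 already tells us that any eigenvalue $\theta \neq \pm k$ of $A$ must satisfy $H_{d-1}(\theta) = -\lambda$ for some eigenvalue $\lambda$ of $A_{d+1}$. So the whole task reduces to determining the possible values of $\lambda$.

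First, I would recall (and briefly restate) the structural observation already made: since $G$ is antipodal and bipartite with $d$ odd, every vertex has exactly $\frac{e}{2}$ vertices at distance $d+1$ from it, all lying in the same partite set, and the antipodal relation partitions $V(G)$ into blocks each inducing a complete graph on $\frac{e}{2}+1$ vertices in $A_{d+1}$. Thus $A_{d+1}$ is exactly the adjacency matrix of a disjoint union of $c = \frac{2n}{e+2}$ copies of $K_{\frac{e}{2}+1}$.

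Second, I would invoke the well-known spectrum of such a disjoint union (cited as Proposition 6 of \cite{vandam}): the eigenvalues of $A_{d+1}$ are $\frac{e}{2}$, with multiplicity $c$, and $-1$, with multiplicity $n-c$. Consequently, any eigenvalue $\lambda$ of $A_{d+1}$ satisfies $\lambda \in \{\frac{e}{2}, -1\}$, and therefore $-\lambda \in \{-\frac{e}{2}, 1\}$.

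Third, substituting into the identity from Theorem 2.3 yields $H_{d-1}(\theta) = \epsilon$ with $\epsilon \in \{-\frac{e}{2}, 1\}$, which is the equation \eqref{eq1}. Since each step is either a direct quotation from the material already established in Section 2 or a standard spectral computation, there is essentially no substantive obstacle; the only point requiring care is ensuring that the assumption $d$ odd (needed for antipodality to be compatible with the bipartition) and the restriction $\theta \neq \pm k$ are correctly carried over from Theorem 2.3, so that no spurious eigenvalues from the principal pair $\pm k$ are introduced into the argument.
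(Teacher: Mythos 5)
Your proposal is correct and follows essentially the same route as the paper: the paper proves Theorem 3.1 implicitly in the paragraph preceding it, by observing that antipodality (with $d$ odd and $\frac{e}{2}$ vertices at distance $d+1$ from each vertex) forces $A_{d+1}$ to be the adjacency matrix of $c=\frac{2n}{e+2}$ disjoint copies of $K_{\frac{e}{2}+1}$, whose spectrum $\{(\frac{e}{2})^{c},(-1)^{n-c}\}$ is then substituted into Theorem 2.3 to give $H_{d-1}(\theta)=-\lambda\in\{-\frac{e}{2},1\}$. Your sign bookkeeping and the caveats about $d$ odd and $\theta\neq\pm k$ match the paper's argument.
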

The roots of $H_{d-1}(x)$ are equal to $2\sqrt{k-1}\cos \frac{i\pi}{d}$ for $i=1,...,d-1$, (see \cite{singleton}). Therefore we assume $x=-2\sqrt{k-1}\cos \phi, 0<\phi<\pi$. Let $s=\sqrt{k-1}.$ Then we have
$$H_{d-1}(x)=(-s)^{d-1}\frac{\sin d\phi}{\sin \phi}.$$
Putting $\phi=\frac{i\pi-\alpha}{d}$, as suggested in \cite{Ban&Ito} and \cite{bigito}, we transform the equation (\ref{eq1}) as follows
\begin{equation}\label{relation}
\sin\alpha-\eta_{i} s^{-d+1}\sin\left(\frac{i\pi-\alpha}{d}\right)=0,
\end{equation}
where $\eta_{i}=\epsilon(-1)^{d+i}.$
The following result follows similarly as Lemma 3.3 from \cite{bigito} and Lemma 2.2 from \cite{pineda}.

\begin{lemma} The equation (\ref{eq1}) has $d-1$ distinct roots $\theta_{1}<\theta_{2}<...<\theta_{d-1}$, with $\theta_{i}=-2s\cos \phi_{i}, (0<\phi_{i}<\pi).$ If we set $\phi_{i}=\frac{i\pi-\alpha_{i}}{d}$ then
\begin{center}
$ 0<\alpha_{i}<min\{s^{-d+1}\phi_{i}, s^{-d+1}(\pi-\phi_{i})\}$ if $\eta_{i}=1;$
\end{center}

\begin{center}
$ max\{-s^{-d+1}\phi_{i}, -s^{-d+1}(\pi-\phi_{i})\}<\alpha_{i}<0$ if $ \eta_{i}=-1;$
\end{center}

\begin{center}
$ 0<\alpha_{i}<min\{\frac{e}{2}s^{-d+1}\phi_{i}, \frac{e}{2}s^{-d+1}(\pi-\phi_{i})\}$ if $ \eta_{i}=\frac{e}{2};$
\end{center}

\begin{center}
$ max\{-\frac{e}{2}s^{-d+1}\phi_{i}, -\frac{e}{2}s^{-d+1}(\pi-\phi_{i})\}<\alpha_{i}<0,$ if $ \eta_{i}=-\frac{e}{2}.$
\end{center}
From the bounds of $\alpha_{i}$ we derive the bounds of $\phi{_i}$ as follows.
\begin{center}
$\frac{i\pi}{d+s^{-d+1}}<\phi_{i}<\frac{i \pi}{d}$ if $\eta_{i}=1$;
\end{center}
\begin{center}
$\frac{i\pi}{d}<\phi_{i}<\frac{i \pi}{d-s^{-d+1}}$ if $ \eta_{i}=-1$;
\end{center}
\begin{center}
$\frac{i\pi}{d+\frac{e}{2} s^{-d+1}}<\phi_{i}<\frac{i \pi}{d}$ if $\eta_{i}=\frac{e}{2};$
\end{center}
\begin{center}
$\frac{i\pi}{d}<\phi_{i}<\frac{i \pi}{d-\frac{e}{2}s^{-d+1}}$ if $\eta_{i}=-\frac{e}{2}.$
\end{center}
\end{lemma}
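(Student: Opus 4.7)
The plan is to adapt the intermediate value theorem argument from Biggs--Ito \cite{bigito} and Pineda-Villavicencio \cite{pineda} to the present setting, where the equation to be solved can have right-hand side $-e/2$ or $1$. First I recall that the recurrence (\ref{2}) for $H_{i}$ is the Chebyshev-like recurrence, so the substitution $\theta=-2s\cos\phi$ (with $s=\sqrt{k-1}$) gives the closed form
$$H_{d-1}(\theta)=(-s)^{d-1}\frac{\sin d\phi}{\sin \phi},$$
stated above (\ref{relation}). Inserting this into (\ref{eq1}) and parameterising $\phi=(i\pi-\alpha)/d$ for each $i\in\{1,\ldots,d-1\}$, the identity $\sin(i\pi-\alpha)=(-1)^{i+1}\sin\alpha$ immediately yields (\ref{relation}) with $\eta_{i}=\epsilon(-1)^{d+i}$. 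Thus for each admissible $\epsilon$, finding the $d-1$ roots of (\ref{eq1}) reduces to finding, for each $i$, one small solution $\alpha_{i}$ of the scalar equation
$$f_{i}(\alpha):=\sin\alpha-\eta_{i}\,s^{-d+1}\sin\!\left(\frac{i\pi-\alpha}{d}\right)=0.$$

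Next, I would study $f_{i}$ on an interval about the origin. Observe $f_{i}(0)=-\eta_{i}s^{-d+1}\sin(i\pi/d)$, whose sign is the opposite of that of $\eta_{i}$. Since $k\geq e+2\geq 4$ gives $s\geq\sqrt{3}$ and $d\geq 3$, the factor $s^{-d+1}$ is small, so it is natural to look for $\alpha_{i}$ very close to $0$. For the upper end of the candidate interval I would, in the case $\eta_{i}=1$, evaluate $f_{i}$ at $\beta:=s^{-d+1}\min\{\phi_{i},\pi-\phi_{i}\}$ (where I provisionally mean $\phi_{i}=i\pi/d$ and then iterate). Using the two elementary properties that (a)~$\sin x/x$ is strictly decreasing on $(0,\pi)$, so $\sin(s^{-d+1}\phi_{i})>s^{-d+1}\sin\phi_{i}$, and (b)~$\sin$ is symmetric about $\pi/2$, so the symmetric bound in $\pi-\phi_{i}$ is what controls the case $\phi_{i}>\pi/2$, I would show $f_{i}(\beta)>0$. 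The IVT then delivers a root $\alpha_{i}\in(0,\beta)$, and computing $f_{i}'(\alpha)=\cos\alpha+(\eta_{i}/d)s^{-d+1}\cos((i\pi-\alpha)/d)>0$ on this small interval (since $\cos\alpha$ dominates) ensures uniqueness. The three remaining sign cases ($\eta_{i}=-1,\pm e/2$) are handled identically, the multiplicative factor $e/2$ in the corresponding bounds coming from replacing $s^{-d+1}$ by $(e/2)s^{-d+1}$ in the candidate endpoint.

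Finally, translating the bounds on $\alpha_{i}$ through $\phi_{i}=(i\pi-\alpha_{i})/d$ is routine arithmetic: for instance $0<\alpha_{i}<s^{-d+1}\phi_{i}$ rearranges to $i\pi/(d+s^{-d+1})<\phi_{i}<i\pi/d$, and the other three bounds on $\phi_{i}$ follow in the same way. Because the intervals for distinct $i$'s lie in disjoint neighbourhoods of the unperturbed points $i\pi/d$, the corresponding angles $\phi_{i}$ and thus the eigenvalues $\theta_{i}=-2s\cos\phi_{i}$ are automatically distinct and can be listed in increasing order; since the polynomial $H_{d-1}(x)-\epsilon$ has degree $d-1$, these $d-1$ roots are all of them.

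The main obstacle I anticipate is the sign-check of $f_{i}$ at the boundary of the candidate interval, which has to be carried out uniformly in the four subcases of $\eta_{i}$ and in the two regimes $\phi_{i}\in(0,\pi/2)$ versus $\phi_{i}\in(\pi/2,\pi)$. The $\min\{\phi_{i},\pi-\phi_{i}\}$ in the statement is precisely the artefact of this symmetry, and getting a bound that is sharp enough for the later application (Theorem~4.2) rather than merely a crude $O(s^{-d+1})$ estimate is the delicate part.
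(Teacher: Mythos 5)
Your plan is essentially the argument the paper itself relies on: the paper omits the proof and defers to Lemma 3.3 of Biggs--Ito and Lemma 2.2 of Pineda-Villavicencio, which is exactly your substitution $\theta=-2s\cos\phi$, $\phi=(i\pi-\alpha)/d$, followed by an intermediate-value/monotonicity argument for the scalar equation in $\alpha$ and the routine translation to bounds on $\phi_i$. The only soft spot is the self-referential endpoint (the bound involves $\phi_i$ itself); rather than ``iterating'' from $i\pi/d$, the clean fix is to apply the strict concavity inequality $\sin(cz)>c\sin z$ (with $c=s^{-d+1}$ or $\tfrac{e}{2}s^{-d+1}$, noting $\tfrac{e}{2}s^{-d+1}<1$ since $k\geq e+2$) directly to the exact root, which yields $\alpha_i<c\,\phi_i$ and $\alpha_i<c(\pi-\phi_i)$ at once and disposes of the $\phi_i\gtrless\pi/2$ case distinction you were worried about.
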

%

We claim that $tr (A^{q})=n(B_{d}^{q})_{0,0}$ for $q=0, 1,...,2d-1,$ where
$$B_{D}=\left(
  \begin{array}{cccccccc}
    0 & 1 & & &  &  &  &  \\
    k & 0 & 1 & &  &  &0  & \\
     & k-1 & 0 & 1 &  &  &  &  \\
     &  & k-1 & 0 & 1 &  &  &  \\
    & &  &  & \ddots   & \ddots  &  &  \\
     &  &  &  & \ddots  &  &  &  \\
     & 0 &  &  &  & k-1  & 0  & k  \\
     &  &  &  &  &  & k-1 & 0  \\
  \end{array}
\right)$$
 is the $(D+1)\times (D+1)$ intersection matrix of a Moore bipartite graph of degree $k$, diameter $D$ and of girth $2D$, (see \cite{pineda}).
If $q<g(G)$, the number of closed walks of length $q$ that start from a fixed vertex $u$ is independent of the vertex $u$ and the excess.
Furthermore, the entry $(B_{\lceil\frac{g(G)}{2}\rceil}^{q})_{0,0}$ gives this number, where $(B_{i}^{q})_{0,0}$ is the $(0,0)$-entry of $B_{i}^{q},$
(see \cite{godsil}). The number of closed walks of length $q$ in $G$ is given by $tr(A^{q})$. Since $G$ is a bipartite graph, it follows that $G$ contains no closed walk of odd length. Thus, $tr(A^{q})=n(B_{d}^{q})_{(0,0)}$ for $q=1, 3,...,2d-3, 2d-1.$ Moreover, since the girth of $G$ is $2d$ we obtain $tr(A^{q})=n(B_{d}^{q})_{(0,0)}$ for $q=0, 1,...,2d-1.$
\begin{theorem} Let $\theta$ be a root of $H_{d-1}(x)-\epsilon.$ The multiplicity $m(\theta)$ of $\theta$ in $G$, $\theta\neq \pm k$, is given by
\begin{equation}\label{important}
m(\theta)=\frac{n e k(k-1) H_{d-2}(\theta)}{2 \epsilon (2\epsilon+\frac{e}{2}-1) H_{d-1}^{'}(\theta)(k^{2}-\theta^{2})}.
\end{equation}
\end{theorem}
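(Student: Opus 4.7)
Plan. The approach is the standard trace method adapted to isolate a single eigenvalue. Let $\epsilon=H_{d-1}(\theta)\in\{1,-e/2\}$ and let $\epsilon'$ denote the other element of $\{1,-e/2\}$; set
$$p(x)=(x^2-k^2)\cdot\frac{H_{d-1}(x)-\epsilon}{x-\theta}\cdot(H_{d-1}(x)-\epsilon'),$$
a polynomial of degree $2d-1$. By construction $p$ vanishes at $\pm k$ and at every eigenvalue of $A$ distinct from $\theta$, and $p(\theta)=(\theta^2-k^2)(\epsilon-\epsilon')H'_{d-1}(\theta)$. On the spectral side, $tr(p(A))=m(\theta)\,p(\theta)$; on the combinatorial side the identity $tr(A^{q})=n(B_{d}^{q})_{0,0}$ for $q\le 2d-1$, established just before the statement of the theorem, gives $tr(p(A))=n\,(p(B_d))_{0,0}$. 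Equating the two expressions,
$$m(\theta)=\frac{n\,(p(B_d))_{0,0}}{(\theta^2-k^2)(\epsilon-\epsilon')H'_{d-1}(\theta)}.$$

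The next step is to evaluate $(p(B_d))_{0,0}$ by diagonalising the intersection matrix $B_d$ of the Moore bipartite graph of girth $2d$. Its spectrum consists of $\pm k$ (where $p$ vanishes) and the $d-1$ roots of $H_{d-1}$. Symmetrising $B_d$ by conjugation with the square roots of the intersection numbers reveals the Jacobi-matrix structure associated with the Dickson polynomials $H_j$ (orthogonal, with norms $(k-1)^j$); Christoffel--Darboux -- equivalently the known multiplicity formula for Moore bipartite graphs -- then yields
$$(E_\mu^{B_d})_{0,0}=\frac{k(k-1)^{d-1}}{(k^2-\mu^2)\,H_{d-2}(\mu)\,H'_{d-1}(\mu)}$$
at every root $\mu$ of $H_{d-1}$. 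Since $p(\mu)=\epsilon\epsilon'(\mu^2-k^2)/(\mu-\theta)$, the computation reduces to evaluating
$$\Sigma:=\sum_{\mu:\,H_{d-1}(\mu)=0}\frac{1}{(\mu-\theta)\,H_{d-2}(\mu)\,H'_{d-1}(\mu)}.$$

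To handle $\Sigma$, interpret each summand as the residue at $\mu$ of $R(y)=1/[(y-\theta)H_{d-1}(y)H_{d-2}(y)]$ and apply the residue theorem; the residue at infinity vanishes for $d\ge 3$ by a degree count, so $\Sigma$ is expressed in terms of the analogous sum over the roots of $H_{d-2}$ together with the residue at $y=\theta$. Iterating the procedure downwards and using the Tur\'an-type identity $H_{j-1}(x)H_{j+1}(x)-H_j(x)^2=-(k-1)^j$ for the Dickson polynomials, the recursion telescopes to
$$\Sigma=-\frac{H_{d-2}(\theta)}{(k-1)^{d-2}H_{d-1}(\theta)}=-\frac{H_{d-2}(\theta)}{\epsilon(k-1)^{d-2}}.$$
Substitution gives $(p(B_d))_{0,0}=\epsilon'\,k(k-1)\,H_{d-2}(\theta)$; plugging this into the formula for $m(\theta)$ and using $\epsilon\epsilon'=-e/2$ rewrites the factor $\epsilon'/(\epsilon-\epsilon')$ as $-e/[2\epsilon(2\epsilon+e/2-1)]$ in both cases $(\epsilon,\epsilon')\in\{(1,-e/2),(-e/2,1)\}$, producing the claimed formula. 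The main technical obstacle is the telescoping evaluation of $\Sigma$ via the Tur\'an-type identity; once that is in hand the rest is bookkeeping.
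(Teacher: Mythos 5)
Your argument is correct, and its first half is literally the paper's: your $p(x)$ is the paper's $\xi_{\theta}(x)$ (since $\{\epsilon,\epsilon'\}=\{1,-\tfrac e2\}$ gives the factors $(H_{d-1}-1)(H_{d-1}+\tfrac e2)$), your $p(\theta)=(\theta^{2}-k^{2})(\epsilon-\epsilon')H_{d-1}'(\theta)$ agrees with the paper's $(2\epsilon+\tfrac e2-1)H_{d-1}'(\theta)(\theta^{2}-k^{2})$ because $\epsilon-\epsilon'=2\epsilon+\tfrac e2-1$, and both proofs pass from $tr(p(A))=m(\theta)p(\theta)$ to $n\,(p(B_{d}))_{0,0}$ via the walk-counting identity $tr(A^{q})=n(B_{d}^{q})_{0,0}$, $q\le 2d-1$. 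Where you genuinely diverge is the evaluation of $(p(B_{d}))_{0,0}$: the paper reduces modulo the minimal polynomial $(x^{2}-k^{2})H_{d-1}(x)$ of $B_{d}$, writes $\xi_{\theta}(B_{d})=\tfrac{e}{2\epsilon}L_{d}(B_{d})$ with $L_{d}(x)=\tfrac{x^{2}-k^{2}}{x-\theta}(H_{d-1}(x)-H_{d-1}(\theta))$, and quotes Pineda-Villavicencio's lemma $(L_{d}(B_{d}))_{0,0}=-k(k-1)H_{d-2}(\theta)$, whereas you expand over the spectral idempotents of $B_{d}$ and evaluate the resulting sum over the roots of $H_{d-1}$ by residues and the identity $H_{j-1}H_{j+1}-H_{j}^{2}=-(k-1)^{j}$. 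Your intermediate claims check out: the formula $(E_{\mu}^{B_{d}})_{0,0}=k(k-1)^{d-1}/[(k^{2}-\mu^{2})H_{d-2}(\mu)H_{d-1}'(\mu)]$ is correct (it follows from the tridiagonal orthogonal-polynomial structure of $B_{d}$, and it reproduces the known multiplicities for $d=2,3,4$), your value of $\Sigma$ is correct and is exactly equivalent to Pineda's lemma, and the bookkeeping via $\epsilon\epsilon'=-\tfrac e2$ and $\epsilon-\epsilon'=2\epsilon+\tfrac e2-1$ delivers the stated formula, so $(p(B_{d}))_{0,0}=\epsilon' k(k-1)H_{d-2}(\theta)$ matches the paper's $-\tfrac{e}{2\epsilon}k(k-1)H_{d-2}(\theta)$. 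Two cautions: the phrase ``the known multiplicity formula for Moore bipartite graphs'' must be understood as a statement about the matrix $B_{d}$ itself, since Moore bipartite graphs do not exist for $d\ge 7$, which is precisely the range used later; so the idempotent entries should be derived from Christoffel--Darboux or the resolvent of the tridiagonal matrix, not from graph existence. Also, no iteration is needed for $\Sigma$: one application of the residue theorem to $1/[(y-\theta)H_{d-1}(y)H_{d-2}(y)]$, using the Tur\'an identity once at the roots of $H_{d-2}$ and once at $\theta$ (to rewrite $(k-1)^{d-2}+H_{d-1}(\theta)H_{d-3}(\theta)$ as $H_{d-2}(\theta)^{2}$), already gives $\Sigma=-H_{d-2}(\theta)/[(k-1)^{d-2}H_{d-1}(\theta)]$, with the degenerate case $H_{d-2}(\theta)=0$ handled separately or by continuity. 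In short, the paper's route is shorter because it outsources the key entry computation to a citation; yours is self-contained modulo the idempotent-entry formula and yields the same constants.
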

\begin{proof} In order to compute the multiplicity of an eigenvalue $\theta$ of $G$, we employ the same approach from \cite{Ban&Ito}, \cite{bigito} and \cite{pineda}. Let $\xi(x)=(x^{2}-k^{2})(H_{d-1}(x)+\frac{e}{2})(H_{d-1}(x)-1)$ and $\xi_{\theta}(x)=\frac{\xi(x)}{x-\theta}.$ Since $\xi(A)=0,$ it follows $m(\theta)=\frac{tr(\xi_{\theta}(A))}{\xi_{\theta}(\theta)}.$\\
As $deg(H_{d-1}(x))=d-1$ we obtain that $deg (\xi_{\theta}(x))=2d-1$. Therefore, let us assume $\xi_{\theta}(x)=x^{2d-1}+a_{2d-2}x^{2d-2}+...+a_{1}x+a_{0}.$ Hence,
$$ tr(\xi_{\theta}(A))=tr(A^{2d-1})+a_{2d-2}tr(A^{2d-2})+...+ a_{1}tr(A)+a_{0}tr(I_{n}).$$
Since $tr(A^{q})=n(B_{d}^{q})_{0,0}$ for $0\leq q\leq 2d-1$, we have
$$tr(\xi_{\theta}(A))=n(\xi_{\theta}(B_{d}))_{0,0}.$$
The polynomial $(x^{2}-k^{2})H_{d-1}(x)$ is a minimal polynomial of $B_{d}$, (see \cite{singleton}). It implies
$$\xi_{\theta}(B_{d})=-\frac{e}{2}\frac{B_{d}^{2}-k^{2}I_{n}}{B_{d}-\theta I_{n}}.$$
Setting $L_{i+1}(x)=\frac{x^{2}-k^{2}}{x-\theta}(H_{i}(x)-H_{i}(\theta))$ for $i=0,...,d-1$, we get
$$L_{d} (B_{d})=-H_{d-1}(\theta)\frac{B_{d}^{2}-k^{2} I_{n}}{B_{d}-\theta I_{n}}=-\epsilon \frac{B_{d}^{2}-k^{2} I_{n}}{B_{d}-\theta I_{n}}.$$
Therefore, $\xi_{\theta}(B_{d})=\frac{e}{2\epsilon} L_{d}(B_{d}).$\\
Calculating the derivative of $(x-\theta)\xi_{\theta}(x)$, that is, $((x-\theta)\xi_{\theta}(x))^{'}=((x^{2}-k^{2})(H_{d-1}(x)+\frac{e}{2})(H_{d-1}(x)-1))^{'}$, we have $\xi_{\theta}(\theta)=(2\epsilon+\frac{e}{2}-1) H_{d-1}^{'}(\theta)(\theta^{2}-k^{2}).$ Thus
$$m(\theta)=\frac{n e}{2\epsilon(2\epsilon+\frac{e}{2}-1)}\frac{(L_{d}(B_{d}))_{0,0}}{H_{d-1}^{'}(\theta)(\theta^{2}-k^{2})}.$$
In \cite{pineda} was proven that $(L_{d}(B_{d}))_{0,0}=-k(k-1)H_{d-2}(\theta).$ Substituting it in the previous expression we obtain
$$m(\theta)=\frac{n e k(k-1)}{2\epsilon(2\epsilon+\frac{e}{2}-1)}\frac{H_{d-2}(\theta)}{H_{d-1}^{'}(\theta)(k^{2}-\theta^{2})}.$$
\end{proof}
\qed

\subsection{Multiplicities as function of $\cos \phi$}

Let $\theta$ be a root of $H_{d-1}(x)-\epsilon$ and let $\theta=-2s \cos \phi, 0<\phi<\pi$. We express the multiplicity of $\theta$, $m(\theta)$, as a function of $\cos \phi$. For that purpose we define the following functions $f(z), g_{1}(z), g_{2}(z)$ and $g_{3}(z).$
$$f(z)=\frac{4s^{2}(1-z^{2})}{k^{2}-4s^{2}z^{2}};$$

$$g_{1}(z)= \frac{k(k-1)(\sqrt{1-s^{-2d+2}(1-z^{2})}+s^{-d+1}z)}{d\sqrt{1-s^{-2d+2}(1-z^{2})}+s^{-d+1}z};$$

$$g_{2}(z)= \frac{k(k-1)(\sqrt{1-\frac{e^{2}}{4}s^{-2d+2}(1-z^{2})}-\frac{e}{2}s^{-d+1}z)}{d\sqrt{1-\frac{e^{2}}{4}s^{-2d+2}(1-z^{2})}-\frac{e}{2}s^{-d+1}z};$$

$$g_{3}(z)= \frac{k(k-1)(\sqrt{1-\frac{e^{2}}{4}s^{-2d+2}(1-z^{2})}+\frac{e}{2}s^{-d+1}z)}{d\sqrt{1-\frac{e^{2}}{4}s^{-2d+2}(1-z^{2})}+\frac{e}{2}s^{-d+1}z}.$$

\begin{lemma} For either value of $\epsilon$, if we set $\theta_{i}=-2s\cos \phi_{i}$ for $1\leq i\leq d-1$, then
$$m(\theta_{i})=\frac{n e}{4s^{2}(\frac{e}{2}+1)}f(\cos \phi_{i})g_{1}(\eta_{i}\cos\phi_{i}), \mbox{ if $\epsilon=1$};$$
$$m(\theta_{i})=\frac{n}{2s^{2}(\frac{e}{2}+1)}f(\cos \phi_{i})g_{2}(\cos\phi_{i}), \mbox{ if $\epsilon=-\frac{e}{2}$ and $i$ is odd};$$
$$m(\theta_{i})=\frac{n}{2s^{2}(\frac{e}{2}+1)}f(\cos \phi_{i})g_{3}(\cos\phi_{i}), \mbox{ if $\epsilon=-\frac{e}{2}$ and $i$ is even}.$$
\end{lemma}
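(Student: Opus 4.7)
My plan is to substitute the closed-form trigonometric expressions for $H_{d-1}, H_{d-2}$, and $H_{d-1}'$ into the formula of Theorem 3.3, and then apply the Biggs--Ito parametrization $\phi_i = (i\pi - \alpha_i)/d$ together with the defining identity of Lemma 3.2. Starting from $\theta = -2s\cos\phi$, one has $H_{d-1}(\theta) = (-s)^{d-1}\sin(d\phi)/\sin\phi$ and $H_{d-2}(\theta) = (-s)^{d-2}\sin((d-1)\phi)/\sin\phi$, and differentiating and dividing by $d\theta/d\phi = 2s\sin\phi$ yields $H_{d-1}'(\theta) = (-s)^{d-1}\bigl(d\cos(d\phi)\sin\phi - \sin(d\phi)\cos\phi\bigr)/(2s\sin^3\phi)$. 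Taking the ratio collapses the powers of $(-s)$, and inserting $\sin^2\phi/(k^2-\theta^2) = f(\cos\phi)/(4s^2)$ together with $k(k-1)/s^2 = k$ brings the expression of Theorem 3.3 into the intermediate form
\begin{equation*}
m(\theta) = \frac{nek\,f(\cos\phi)}{4\epsilon(2\epsilon + e/2 - 1)}\cdot\frac{\sin((d-1)\phi)}{\sin(d\phi)\cos\phi - d\cos(d\phi)\sin\phi}.
\end{equation*}

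The Biggs--Ito substitution then gives $\sin(d\phi_i) = (-1)^{i+1}\sin\alpha_i$, $\cos(d\phi_i) = (-1)^i\cos\alpha_i$, and $\sin((d-1)\phi_i) = (-1)^{i+1}\sin(\alpha_i + \phi_i)$; the factors $(-1)^{i+1}$ cancel in the quotient and the formula simplifies to
\begin{equation*}
m(\theta_i) = \frac{nek\,f(\cos\phi_i)}{4\epsilon(2\epsilon + e/2 - 1)}\cdot\frac{\sin(\alpha_i + \phi_i)}{d\cos\alpha_i\sin\phi_i + \sin\alpha_i\cos\phi_i}.
\end{equation*}
Dividing numerator and denominator of the last fraction by $\sin\phi_i$ and applying the Lemma 3.2 identity in the form $s^{-d+1}\eta_i = \sin\alpha_i/\sin\phi_i$ (with $\cos\alpha_i = \sqrt{1 - \eta_i^2 s^{-2d+2}(1 - \cos^2\phi_i)}$ on the positive branch guaranteed by Lemma 3.2) turns this trigonometric quotient into
\begin{equation*}
\frac{\cos\alpha_i + s^{-d+1}\eta_i\cos\phi_i}{d\cos\alpha_i + s^{-d+1}\eta_i\cos\phi_i},
\end{equation*}
whose numerator and denominator are precisely the building blocks of $g_1, g_2, g_3$.

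The argument closes with a brief case analysis. For $\epsilon = 1$ one has $\epsilon(2\epsilon + e/2 - 1) = (e+2)/2$ and $\eta_i = \pm 1$, so $\eta_i^2 = 1$; the quotient above equals $g_1(\eta_i\cos\phi_i)/[k(k-1)]$, and multiplying by the prefactor produces the first stated formula. For $\epsilon = -e/2$ one has $\epsilon(2\epsilon + e/2 - 1) = (e/2)(e/2 + 1)$, and the hypothesis that $d$ is odd forces $\eta_i = (e/2)(-1)^i$, so that $\eta_i = +e/2$ for even $i$ and $\eta_i = -e/2$ for odd $i$; substituting each sign into the reduced quotient yields $g_3(\cos\phi_i)/[k(k-1)]$ and $g_2(\cos\phi_i)/[k(k-1)]$ respectively, which, combined with the prefactor, match the last two stated formulas. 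The main obstacle throughout is pure sign bookkeeping: the factors $(-s)^{d-1}$, $(-1)^i$, $\eta_i$, and the choice of branch of $\cos\alpha_i$ must be tracked carefully, but once the sign convention of Lemma 3.2 is imposed (so that $\alpha_i$ takes the sign of $\eta_i$) every factor collapses cleanly and the verification becomes routine.
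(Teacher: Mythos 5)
Your proposal is correct and follows essentially the same route as the paper: substituting the trigonometric forms of $H_{d-2}$ and $H_{d-1}'$ into the multiplicity formula of Theorem 3.3, using the relation $\sin\alpha_i=\eta_i s^{-d+1}\sin\phi_i$ and the positive branch of $\cos\alpha_i$ from Lemma 3.2, and then splitting into cases according to $\epsilon$ and the parity of $i$ (with $\eta_i=(e/2)(-1)^i$ since $d$ is odd). The only difference is that you derive $H_{d-1}'$ directly from the closed form rather than quoting it from Pineda-Villavicencio, and your intermediate expressions match the paper's after the cancellation $k(k-1)/s^2=k$.
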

\begin{proof} The derivative of $H_{d-1}(x)$ is computed in \cite{pineda}. We have
$$H_{d-1}^{'}(\theta_{i})=\frac{(-s)^{d-1}(-1)^{i}}{2s \sin^{2}\phi_{i}}(d\cos \alpha_{i}+\eta_{i}s^{-d+1}\cos\phi_{i}).$$
Substituting $H_{d-2}(\theta_{i})=(-s)^{d-2}(-1)^{i+1}\frac{\sin (\phi_{i}+\alpha_{i})}{\sin \phi_{i}}$ and $H_{d-1}^{'}(\theta_{i})$ in (\ref{important}), we obtain
$$m(\theta_{i})= \frac{ n ek (k-1) \sin \phi_{i} \sin(\phi_{i}+\alpha_{i})}{\epsilon (2\epsilon+\frac{e}{2}-1)(k^{2}-\theta_{i}^{2})(d \cos \alpha_{i}+\eta_{i}s^{-d+1}\cos \phi_{i})}.$$
The equation (\ref{relation}) yields $\sin(\phi_{i}+\alpha_{i})=\sin \phi_{i}(\cos \alpha_{i}+\eta_{i}s^{-d+1}\cos \phi_{i})$. Hence
$$ m(\theta_{i})= \frac{ ne \sin^{2} \phi_{i} }{\epsilon (2\epsilon+\frac{e}{2}-1)(k^{2}-\theta_{i}^{2})}\frac {k(k-1)(\cos \alpha_{i}+\eta_{i}s^{-d+1}\cos \phi_{i})}{(d \cos \alpha_{i}+\eta_{i}s^{-d+1}\cos \phi_{i})}.$$
By equation (\ref{relation}) and Lemma 3.2, as $k, d\geq 3$, it follows that if $\eta_{i}=1$ or $\eta_{i}=\frac{e}{2}$ then $0<\alpha_{i}<\frac{\pi}{2}.$ Similarly, if $\eta_{i}=-1$ or $\eta_{i}=- \frac{e}{2}$, then $-\frac{\pi}{2}<\alpha_{i}<0.$ Therefore $\cos \alpha_{i}>0$, and thus, $\cos \alpha_{i}=\sqrt{1-\eta_{i}^{2}s^{-2d+2}(1-\cos^{2}\phi_{i})}$. It implies
$$ m(\theta_{i})= \frac{ ne }{4s^{2}\epsilon (2\epsilon+\frac{e}{2}-1)}\frac{4s^{2}(1-\cos^{2}\phi_{i})}{k^{2}-4s^{2}\cos^{2}\phi_{i}}\frac {k(k-1)(\sqrt{1-\eta_{i}^{2}s^{-2d+2}(1-\cos^{2}\phi_{i})}+\eta_{i}s^{-d+1}\cos \phi_{i})}{(d \sqrt{1-\eta_{i}^{2}s^{-2d+2}(1-\cos^{2}\phi_{i})}+\eta_{i}s^{-d+1}\cos \phi_{i})}.$$
Using the formulas for $f, g_{1}, g_{2}$ and $g_{3}$ we get the desired result.
\qed
\end{proof}
\medskip

The following two lemmas concern the monotonicity of $f, g_{1}, g_{2}$ and $g_{3}.$ The first lemma is given in \cite{bigito} and \cite{pineda} (Lemma 3.5 and Lemma 4.1).

\begin{lemma} For $k\geq3$ and $\mid \!\! z \!\! \mid<1$ the function $f(z)$ is even and concave down.
\end{lemma}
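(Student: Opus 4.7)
The plan is to reduce $f$ to a transparent closed form via the elementary identity $k^2 - 4(k-1) = (k-2)^2$. With $s^2 = k-1$, this identity lets me rewrite
$$f(z) = 1 - \frac{(k-2)^2}{k^2 - 4(k-1)z^2},$$
from which evenness is immediate, since only $z^2$ appears. For concavity I would then differentiate this simple rational form directly rather than the original quotient.

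Writing $g(z) = k^2 - 4(k-1)z^2$, two differentiations yield
$$f'(z) = -\frac{8(k-1)(k-2)^2\, z}{g(z)^2}, \qquad f''(z) = -\frac{8(k-1)(k-2)^2\bigl(k^2 + 12(k-1)z^2\bigr)}{g(z)^3},$$
where the numerator of $f''(z)$ uses the elementary computation $g(z) - 2z g'(z) = k^2 + 12(k-1)z^2$. To conclude, I would observe that for $|z|<1$ and $k \geq 3$ one has $g(z) > k^2 - 4(k-1) = (k-2)^2 \geq 1$; hence $g(z)^3 > 0$, the factors $(k-1)$, $(k-2)^2$ and $k^2 + 12(k-1)z^2$ are all positive, and so $f''(z) < 0$ throughout $(-1,1)$.

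I anticipate no real obstacles. The one step worth highlighting is the use of the identity $k^2 - 4(k-1) = (k-2)^2$: it simplifies $f$, guarantees that $g(z)$ stays bounded away from zero on the open interval $(-1,1)$ for $k \geq 3$, and makes the sign of $f''(z)$ a matter of inspection. Without this reduction the second derivative would appear as a cumbersome rational expression whose sign is less transparent.
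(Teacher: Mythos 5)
Your proof is correct, and it is worth noting that the paper itself gives no proof of this lemma at all: it simply refers to Lemma 3.5 of Biggs--Ito \cite{bigito} and Lemma 4.1 of \cite{pineda}, so your argument supplies a self-contained verification of a fact the paper only cites. The key identity $k^{2}-4(k-1)=(k-2)^{2}$ together with $s^{2}=k-1$ gives $f(z)=1-\frac{(k-2)^{2}}{k^{2}-4(k-1)z^{2}}$, from which evenness is immediate, and your computation
$$f''(z)=-\frac{8(k-1)(k-2)^{2}\left(k^{2}+12(k-1)z^{2}\right)}{\left(k^{2}-4(k-1)z^{2}\right)^{3}}<0$$
is valid on $(-1,1)$ because $k^{2}-4(k-1)z^{2}>(k-2)^{2}\geq 1$ there; the hypothesis $k\geq 3$ is genuinely used, since for $k=2$ one has $(k-2)^{2}=0$ and $f$ degenerates to a constant. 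This matches in spirit the computations in the cited sources, so no gap remains.
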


\begin{lemma} For $k\geq3$, $d\geq3$ and $\mid \!\! z \!\! \mid<1$, the monotonicity of $g_{1}(z), g_{2}(z)$ and $g_{3}(z)$ behave as follows.
\begin{itemize}
\item[{\rm (1)}] $g_{1}(z)$ is monotonic increasing;
 \item[{\rm (2)}] $g_{2}(z)$ is monotonic decreasing;
 \item[{\rm (3)}] $g_{3}(z)$ is monotonic increasing;
\end{itemize}
\end{lemma}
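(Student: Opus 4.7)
My plan is to treat the three functions uniformly. Observe that each of $g_1, g_2, g_3$ has the shape
\begin{equation*}
g(z) \;=\; k(k-1)\,\frac{u(z)+cz}{d\,u(z)+cz},
\qquad
u(z) \;=\; \sqrt{1-a^{2}(1-z^{2})},
\end{equation*}
where $(a,c)=(s^{-d+1},\,s^{-d+1})$ for $g_{1}$, $(a,c)=(\tfrac{e}{2}s^{-d+1},-\tfrac{e}{2}s^{-d+1})$ for $g_{2}$, and $(a,c)=(\tfrac{e}{2}s^{-d+1},\,\tfrac{e}{2}s^{-d+1})$ for $g_{3}$. So the monotonicity of all three functions reduces to a single derivative computation in which only the sign of $c$ and the quantity $1-a^{2}$ matter.

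First I would compute $u'(z)=a^{2}z/u(z)$, then apply the quotient rule to $g$. Writing the numerator of $g'(z)$ as $(v'w-vw')/w^{2}$ where $v=u+cz$ and $w=du+cz$, a short expansion gives
\begin{equation*}
u\cdot(v'w-vw')
\;=\;(a^{2}z+cu)(du+cz)-(u+cz)(da^{2}z+cu)
\;=\;c(d-1)\bigl(u^{2}-a^{2}z^{2}\bigr).
\end{equation*}
Since $u^{2}-a^{2}z^{2}=1-a^{2}(1-z^{2})-a^{2}z^{2}=1-a^{2}$, this collapses to
\begin{equation*}
g'(z)\;=\;\frac{k(k-1)\,c\,(d-1)\,(1-a^{2})}{u(z)\,w(z)^{2}}.
\end{equation*}
The denominator is positive (both factors are positive on $|z|<1$, and $w>0$ in the allowed ranges of $z$ given in Lemma~3.2, because the numerators of $g_{i}$ are positive there), so the sign of $g'(z)$ is governed entirely by $c\,(1-a^{2})$.

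Next I would verify $1-a^{2}>0$ in each case. For $g_{1}$, $a^{2}=(k-1)^{-(d-1)}\le 1/(k-1)<1$ since $k\geq3,\,d\geq3$. For $g_{2}$ and $g_{3}$, $a^{2}=\tfrac{e^{2}}{4(k-1)^{d-1}}$; using the hypothesis $e\leq k-2$ together with $d\geq3$, I have $4(k-1)^{d-1}\geq 4(k-1)^{2}>(k-2)^{2}\geq e^{2}$, so again $a^{2}<1$. Combining this with the signs $c>0$ (for $g_{1}$ and $g_{3}$) and $c<0$ (for $g_{2}$) yields the three claimed monotonicities at once.

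I do not anticipate a real obstacle: the whole argument is the telescoping of a quotient-rule expansion, and the only care needed is (i) to notice the common algebraic form of $g_{1},g_{2},g_{3}$ so that one differentiation serves all three, and (ii) to check $1-a^{2}>0$, which is where the hypotheses $k\geq e+2$ and $d\geq3$ are used.
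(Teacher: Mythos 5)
Your proposal is correct and is essentially the paper's own argument: the paper likewise differentiates $g_{1}$ and $g_{2}$ directly, obtaining exactly your formula $g'(z)=k(k-1)c(d-1)(1-a^{2})/\bigl(u(z)\,w(z)^{2}\bigr)$ with the corresponding $(a,c)$, and then checks $1-a^{2}>0$ from $k\geq e+2$ and $d\geq3$ (with $g_{3}$ handled ``as in (2)''). Your only departure is the cosmetic one of doing the quotient-rule computation once in parametrized form rather than three times.
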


\begin{proof}
 \begin{itemize}
 \item[{\rm (1)}] It is suffice to prove that $g_{1}^{'}(z)$ is positive on the interval $(-1,1)$. We have
$$g_{1}^{'}(z)= \frac{k(k-1)(d-1)s^{-d+1}(1-s^{-2d+2})}{\sqrt{1+s^{-2d+2}(-1+z^{2})}(d \sqrt{1+s^{-2d+2}(-1+z^{2})}+s^{-d+1}z)^{2}}>0.$$
 \item[{\rm (2)}] In this case we prove that $g_{2}^{'}(z)$ is negative on the interval $(-1,1)$.
 $$g_{2}^{'}(z)= \frac{-\frac{e}{2}s^{-d+1}k(k-1)(d-1)(1-\frac{e^{2}}{4}s^{-2d+2})}{\sqrt{1+\frac{e^{2}}{4}s^{-2d+2}(-1+z^{2})}(d \sqrt{1+\frac{e^{2}}{4}s^{-2d+2}(-1+z^{2})}-\frac{e}{2}s^{-d+1}z)^{2}}.$$
Since $k, d\geq3$ and $k\geq e+2$, we easily conclude that $ \frac{e^{2}}{4}s^{-2d+2}<1$ and $ \mid \!\!  \frac{e^{2}}{4}s^{-2d+2}(-1+z^{2})\!\! \mid <1$. 
\item[{\rm (3)}] It follows from the same reasoning as (2).
\end{itemize}
\qed
\end{proof}

\section{Main result}
Let $\lambda_{1}<\lambda_{2}<...<\lambda_{d-1}$ be the roots of $H_{d-1}(x)+\frac{e}{2}$, and let $\mu_{1}<\mu_{2}<...<\mu_{d-1}$ be the roots of $H_{d-1}(x)-1.$
\begin{lemma} Let $\lambda_{1},...,\lambda_{d-1}$ and $\mu_{1},...,\mu_{d-1}$ be defined as above. If $k\geq3$ and $d\geq3$ is an odd number, then
\begin{itemize}
\item[{\rm (1)}] $m(\lambda_{i})=m(\lambda_{d-i})$ and  $m(\mu_{i})=m(\mu_{d-i})$, for $1\leq i\leq d-1$;
 \item[{\rm (2)}]  $m(\mu_{2})<m(\mu_{i})$ for $3\leq i\leq d-3$, and $m(\lambda_{1})<m(\lambda_{i})$, for $2\leq i\leq d-2.$
\end{itemize}
\end{lemma}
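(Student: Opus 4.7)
The plan is to exploit the reflection symmetry $i \leftrightarrow d-i$ for part (1), and then combine the symmetry with the monotonicity/evenness of $f, g_{1}, g_{2}, g_{3}$ for part (2).

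For part (1), the key observation is that since $d$ is odd, $\eta_{d-i}=\epsilon(-1)^{d+d-i}=-\epsilon(-1)^{d+i}=-\eta_{i}$. First I would show $\alpha_{d-i}=-\alpha_{i}$: substituting $\beta=-\alpha_{d-i}$ into the defining equation (\ref{relation}) for $\alpha_{d-i}$ and using $\sin\bigl(\pi-\tfrac{i\pi+\alpha_{d-i}}{d}\bigr)=\sin\tfrac{i\pi+\alpha_{d-i}}{d}$ together with the oddness of $\sin$ produces exactly the equation for $\alpha_{i}$; the sign-controlled bounds of Lemma 3.2 ensure uniqueness, so $\beta=\alpha_{i}$. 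Hence $\phi_{d-i}=\pi-\phi_{i}$ and $\cos\phi_{d-i}=-\cos\phi_{i}$. The equalities $m(\mu_{d-i})=m(\mu_{i})$ and $m(\lambda_{d-i})=m(\lambda_{i})$ now follow from the evenness of $f$ (Lemma 3.4), the identity $g_{1}(\eta_{d-i}\cos\phi_{d-i})=g_{1}((-\eta_{i})(-\cos\phi_{i}))=g_{1}(\eta_{i}\cos\phi_{i})$, and the direct verification from the formulas that $g_{3}(z)=g_{2}(-z)$ (combined with the fact that $i$ and $d-i$ have opposite parity, so $g_{2}$ and $g_{3}$ get swapped in the $\lambda$-case).

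For part (2), by the symmetry just established I would restrict to $i\leq\lfloor(d-1)/2\rfloor$. The bounds of Lemma 3.2 (whose perturbations $\alpha_{i}$ are of order $s^{-d+1}$, far smaller than the gap $\pi/d$) force $\phi_{1}<\phi_{2}<\cdots<\pi/2$, so over this range $\cos\phi_{i}>0$ and is strictly decreasing in $i$. For the $\mu$-inequality with $3\leq i\leq\lfloor(d-1)/2\rfloor$: $|\cos\phi_{i}|<\cos\phi_{2}$ combined with the concavity and evenness of $f$ (Lemma 3.4) gives $f(\cos\phi_{i})>f(\cos\phi_{2})$, and splitting on parity of $i$, the monotonicity of $g_{1}$ (Lemma 3.5) yields either $g_{1}(\cos\phi_{i})>g_{1}(0)>g_{1}(-\cos\phi_{2})$ (odd $i$) or $g_{1}(-\cos\phi_{i})>g_{1}(-\cos\phi_{2})$ (even $i$); multiplying gives $m(\mu_{i})>m(\mu_{2})$. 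The $\lambda$-inequality is analogous: $f(\cos\phi_{i})>f(\cos\phi_{1})$ from $|\cos\phi_{i}|<\cos\phi_{1}$, and using $g_{3}(z)=g_{2}(-z)$ the remaining factor is $g_{2}(\cos\phi_{i})$ (for $i$ odd) or $g_{2}(-\cos\phi_{i})$ (for $i$ even), both of which exceed $g_{2}(\cos\phi_{1})$ by the decreasing monotonicity of $g_{2}$.

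The chief technical nuisance is the careful verification that the $\alpha_{i}$-perturbations never disturb the ordering of $\{\cos\phi_{i}\}_{i\leq(d-1)/2}$, nor push any such $\phi_{i}$ past $\pi/2$. This is handled by the explicit sandwich inequalities of Lemma 3.2, where the error terms carry an $s^{-d+1}$ factor and are dominated by the spacing $\pi/d$ of the unperturbed grid; however each comparison, in particular the boundary one at $i=\lfloor(d-1)/2\rfloor$, needs this bookkeeping to justify the strict inequality. No further ideas are required beyond this careful accounting.
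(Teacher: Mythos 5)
Your proposal is correct and takes essentially the same route as the paper: part (1) rests on the root symmetry $\theta\mapsto-\theta$ (the paper reads it off the multiplicity formula (\ref{important}) via the parities of $H_{d-2}$ and $H_{d-1}^{'}$, you via $\alpha_{d-i}=-\alpha_{i}$, $\phi_{d-i}=\pi-\phi_{i}$ and $g_{3}(z)=g_{2}(-z)$), and part (2) is the same combination of Lemma 3.3 with the evenness/concavity of $f$ and the monotonicity of $g_{1}$, $g_{2}$. The only cosmetic difference is that for even $i$ in the $\lambda$-case the paper reduces to the odd index $d-i$ via part (1) while you use $g_{3}(z)=g_{2}(-z)$ directly, and your ``chief technical nuisance'' is not really needed: the ordering of the $\phi_{i}$ is automatic from the ordering of the roots $\theta_{i}=-2s\cos\phi_{i}$, and only the strict inequalities $-\cos\phi_{2}<\cos\phi_{i}<\cos\phi_{2}$ (resp. with $\phi_{1}$) are required.
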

\begin{proof}
\begin{itemize}
\item[{\rm (1)}] If $d$ is odd $H_{d-1}(-x)=H_{d-1}(x)$. Therefore $\theta$ is a root of $H_{d-1}(x)-\epsilon$, if and only if, $-\theta$ is a root of $H_{d-1}(x)-\epsilon,$ (see [1]). Then $\lambda_{i}+\lambda_{d-i}=\mu_{i}+\mu_{d-i}=0$. By checking (\ref{important}) and using $H_{d-2}(-x)=-H_{d-2}(x)$, we obtain $m(\lambda_{i})=m(\lambda_{d-i})$ and $m(\mu_{i})=m(\mu_{d-i})$ for each $1\leq i\leq d-1$.
\item[{\rm (2)}] Since $\mu_{i}$ is a root of $H_{d-1}(x)-1$, we have $\epsilon=1.$ According to Lemma 3.2 let us set $\mu_{i}=-2s \cos \phi_{i}$, for $1\leq i\leq d-1$. In this case $\eta_{i}=\epsilon (-1)^{d+i}=(-1)^{i+1}.$
    Since $-\mu_{2}=\mu_{d-2}$ we obtain $-\cos \phi_{2}=\cos \phi_{d-2}.$ Now, for $3\leq i\leq d-3$, we have $-\cos \phi_{2}=\cos \phi_{d-2}< \cos \phi_{i}< \cos \phi_{2}$. Since $f$  is even and concave down function we have
    $$f(\cos  \phi_{2})<f(\cos \phi_{i}) \mbox{ for $3\leq i\leq d-3$.}$$
    The inequality $\cos \phi_{i}<|\cos \phi_{2}|$ and the fact that $g_{1}(z)$ is a monotonic increasing function yield $g_{1}(\eta_{2}\cos \phi_{2})=g_{1}(-\cos \phi_{2})<g_{1}(\pm \cos \phi_{i})$.\\ Therefore, for $3\leq i\leq d-3$, we conclude
     $$m(\mu_{2})=\frac{n e}{4s^{2}(\frac{e}{2}+1)}f(\cos \phi_{2})g_{1}(\eta_{2} \cos \phi_{2})<\frac{n e}{4s^{2}(\frac{e}{2}+1)}f(\cos \phi_{i})g_{1}(\pm \cos \phi_{i})=m(\mu_{i}).$$
     We proceed similarly when $\lambda_{i}$ is a root of $H_{d-1}(x)+\frac{e}{2}$. In this case $\epsilon=-\frac{e}{2}$ and $\eta_{i}=\frac{e}{2}(-1)^{i}.$
     Again let $\lambda_{i}=-2s\cos \phi_{i},$ for $1\leq i\leq d-1$. Following the same reasoning as above we have
       $f(\cos  \phi_{1})<f(\cos \phi_{i})$  for $2\leq i\leq d-2$.\\
     Now, let $i$ be an odd number such that $3\leq i\leq d-2$. For such $i$ we note $\eta_{i}=-\frac{e}{2}<0$.
       Hence $g_{2}(z)$ is a monotonic decreasing, and therefore, $\cos \phi_{i}<\cos \phi_{1}$ yields $g_{2}(\cos \phi_{1})<g_{2}(\cos \phi_{i}).$
       Thus, for odd $i$ such that $3\leq i\leq d-2$ we have
       $$m(\lambda_{1})=\frac{n}{2s^{2}(\frac{e}{2}+1)}f(\cos \phi_{1})g_{2}(\cos \phi_{1})<\frac{n}{2s^{2}(\frac{e}{2}+1)}f(\cos \phi_{i})g_{2}( \cos \phi_{i})=m(\lambda_{i}).$$
       Since $m(\lambda_{i})=m(\lambda_{d-i})$ occurs $m(\lambda_{1})<m(\lambda_{i})$ for each $2\leq i\leq d-2.$
\end{itemize}
\qed
\end{proof}

Based on Lemma 4.1, we are ready to give the main result in this paper.

\begin{theorem} Let $k\geq e+2\geq4$ and $g=2d\geq14$ be fixed. There exists no antipodal $(k,g)$-cage of excess $e$.
\end{theorem}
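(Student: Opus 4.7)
The strategy is to assume, for contradiction, that an antipodal $(k,g)$-cage $G$ with the stated parameters exists, and to derive a contradiction from three ingredients: the spectral description from Section 3, the strict multiplicity inequalities of Lemma 4.1, and the Galois principle that conjugate eigenvalues of the integer matrix $A$ must share the same multiplicity. That last principle is the crucial new input: since the characteristic polynomial of $A$ lies in $\mathbb{Z}[x]$, any two roots of a common irreducible factor over $\mathbb{Q}$ appear in the spectrum of $A$ with the same multiplicity.

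The first step is to exploit Galois conjugacy to bound the algebraic degrees of distinguished eigenvalues. By Lemma 4.1, within the roots of $H_{d-1}(x)+\frac{e}{2}$ the pair $\{\lambda_1,\lambda_{d-1}\}=\{\lambda_1,-\lambda_1\}$ is the unique pair attaining the strict minimum of the multiplicities $m(\lambda_i)$. Hence any Galois conjugate of $\lambda_1$ that is a root of $H_{d-1}(x)+\frac{e}{2}$ must lie in $\{\lambda_1,-\lambda_1\}$, so the minimal polynomial of $\lambda_1$ over $\mathbb{Q}$ has degree at most $2$, forcing $\lambda_1^{2}\in\mathbb{Q}$. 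An entirely analogous argument applied to $\mu_2$ (whose potential conjugates among $\mu_3,\ldots,\mu_{d-3}$ are excluded by Lemma 4.1(2)) shows that the Galois orbit of $\mu_2$ is contained in $\{\pm\mu_1,\pm\mu_2\}$, so its minimal polynomial divides $(x^{2}-\mu_1^{2})(x^{2}-\mu_2^{2})\in\mathbb{Q}[x]$.

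The second step is to convert these degree bounds into a numerical contradiction using the parameterization $\theta=-2s\cos\phi$. From $\lambda_1=-2s\cos\phi_1$ and $s^{2}=k-1\in\mathbb{Z}$, the condition $\lambda_1^{2}\in\mathbb{Q}$ gives $\cos^{2}\phi_1\in\mathbb{Q}$; paired with the defining equation (\ref{relation}) and the narrow window for $\phi_1$ provided by Lemma 3.2, this yields a very tight arithmetic identity tying $\cos\phi_1$, $k$, $d$, and $e$. Running the same reasoning for $\mu_2$ and its possible conjugates $\pm\mu_1$ yields a second such identity. Following the methodology of \cite{bigito} and \cite{pineda}, the plan is to show that these identities cannot be simultaneously satisfied once $d\geq 7$: the polynomials $H_{d-1}(x)+\frac{e}{2}$ and $H_{d-1}(x)-1$ each carry at least six real roots that must partition into Galois orbits of tightly controlled sizes, and the strict monotonicity and concavity of $f,g_1,g_2,g_3$ (Lemmas 3.5 and 3.6) rule out enough $\lambda_i$'s or $\mu_i$'s sharing a common multiplicity to accommodate any such partition.

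The main obstacle is this final arithmetic step. The Galois-orbit argument cleanly constrains only $\lambda_1$ and $\mu_2$ (together with their symmetric partners), while the remaining roots $\lambda_2,\ldots,\lambda_{d-2}$ and $\mu_3,\ldots,\mu_{d-3}$ must still partition into orbits of equal multiplicity; the delicate part is to verify, using the explicit expressions of Section 3.1, that the strict monotonicity of $m$ as a function of $\cos\phi$ leaves no room for such a partition when $d\geq 7$. The hypothesis $d\geq 7$ (i.e.\ $g\geq 14$) enters precisely to guarantee that there are enough ``interior'' indices so that the multiplicity function, viewed along the interlaced roots of $H_{d-1}(x)\pm c$, takes too many distinct values to be compatible with any low-degree factorization of the two polynomials over $\mathbb{Q}$.
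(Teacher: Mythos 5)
Your opening move is the same as the paper's: since the characteristic polynomial of $A$ has integer coefficients, algebraically conjugate eigenvalues have equal multiplicities, and combined with the strict multiplicity inequalities of Lemma 4.1 this forces the distinguished roots to have Galois orbits contained in a pair $\{\theta,-\theta\}$, whence their squares are rational, and in fact integers (they are algebraic integers). From that point on, however, your text stops being a proof. The ``very tight arithmetic identity tying $\cos\phi_1$, $k$, $d$, and $e$'' is never written down, the claimed impossibility of partitioning the remaining roots into Galois orbits is only asserted, and you yourself flag the final arithmetic step as an unresolved obstacle. Worse, the route you sketch would not go through as described: Lemma 4.1 gives only a strict minimum of the multiplicity at one root of each polynomial, together with the symmetry $m(\theta_i)=m(\theta_{d-i})$; nothing in Section 3 prevents the interior roots $\lambda_2,\dots,\lambda_{d-2}$ or $\mu_3,\dots,\mu_{d-3}$ from sharing multiplicities, so there is no obstruction to large Galois orbits among them and no contradiction can be extracted from counting ``distinct values'' of $m$ along the interior roots. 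Also, the pair you single out, $\lambda_1$ and $\mu_2$, lie near $-2s\cos\frac{\pi}{d}$ and $-2s\cos\frac{2\pi}{d}$ respectively, so they are not close to each other and cannot be played against one another arithmetically.

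The paper's decisive step is much more localized and entirely quantitative, and it is missing from your proposal. Having established that $\mu_2^2$ and $\lambda_2^2$ are integers (here $\lambda_2,\mu_2$ are the second roots of $H_{d-1}(x)+\frac{e}{2}$ and $H_{d-1}(x)-1$), it uses Lemma 3.2 to trap both in the same narrow window around $-2s\cos\frac{2\pi}{d}$, namely
$$-2s\cos\frac{2\pi}{d+\frac{e}{2}s^{-d+1}}<\lambda_2<-2s\cos\frac{2\pi}{d}<\mu_2<-2s\cos\frac{2\pi}{d-s^{-d+1}},$$
so that $\lambda_2^2-\mu_2^2>0$, and then a product-to-sum trigonometric estimate together with $\sin x<x$ and $\cos x<1$ gives
$$\lambda_2^2-\mu_2^2<\frac{16\pi^2\left(\frac{e}{2}+1\right)s^{-d+3}\left(2d+\left(\frac{e}{2}-1\right)s^{-d+1}\right)}{\left(d-s^{-d+1}\right)^2\left(d+\frac{e}{2}s^{-d+1}\right)^2},$$
which is shown to be $<1$ exactly under the hypotheses $k\geq e+2\geq4$ and $d\geq7$ (this is where $g\geq14$ enters). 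An integer strictly between $0$ and $1$ is the contradiction. Since your proposal produces no inequality of this kind and replaces it by an unexecuted plan of a different (and, as it stands, unworkable) nature, it has a genuine gap at precisely the point where the theorem is actually proved.
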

\begin{proof} Since $m(\mu_{2})<m(\mu_{i})$ for $ 3\leq i\leq d-3$ and $\mu_{1}+\mu_{d-1}=0$, we obtain that $\mu_{2}$ and $\mu_{d-2}=-\mu_{2}$ are either conjugate quadratic irrationals or integers. Therefore, $\mu_{2}^{2}$ is an integer. Analogously, $\lambda_{2}^{2}$ is an integer. Hence $\lambda_{2}^{2}-\mu_{2}^{2}$ is an integer number.
By Lemma 3.2 we have
$$-2s \cos \frac{2 \pi}{d}<\mu_{2}<-2s \cos \frac{2 \pi}{d-s^{-d+1}}$$
$$-2s \cos \frac{2 \pi}{d+\frac{e}{2}s^{-d+1}}<\lambda_{2}<-2s \cos \frac{2 \pi}{d}.$$
Then, as $ \lambda_{2}^{2}>4s^{2}\cos^{2} \frac {2 \pi}{d}$
and $ \mu_{2}^{2}<4s^{2}\cos^{2} \frac {2 \pi}{d}$, we have that
$\lambda_{2}^{2}-\mu_{2}^{2}>0.$
Furthermore, as $ \lambda_{2}^{2}<4s^{2}\cos^{2} \frac {2 \pi}{d+\frac{e}{2}s^{-d+1}}$ and $ \mu_{2}^{2}>4s^{2}\cos^{2} \frac {2 \pi}{d-s^{-d+1}}$, we have that
$$\lambda_{2}^{2}-\mu_{2}^{2}<4s^{2}\left(\cos^{2}\frac{2\pi}{d+\frac{e}{2}s^{-d+1}}-
\cos^{2}\frac{2\pi}{d-s^{-d+1}}\right)=
4s^{2}\left(\sin^{2}\frac{2\pi}{d-s^{-d+1}}-\sin^{2}\frac{2\pi}{d+\frac{e}{2}s^{-d+1}}\right)=$$
$$=4s^{2}\left(\sin\frac{2\pi}{d-s^{-d+1}}-\sin\frac{2\pi}{d+\frac{e}{2}s^{-d+1}}\right)\left(\sin\frac{2\pi}{d-s^{-d+1}}+\sin\frac{2\pi}{d+\frac{e}{2}s^{-d+1}}\right)=$$
$$=16 s^{2} \sin \left(\frac{\pi}{d-s^{-d+1}}-\frac{\pi}{d+\frac{e}{2}s^{-d+1}}\right)\cos \left(\frac{\pi}{d-s^{-d+1}}+\frac{\pi}{d+\frac{e}{2}s^{-d+1}}\right)\cdot$$
 $$\cdot \sin \left(\frac{\pi}{d-s^{-d+1}}+\frac{\pi}{d+\frac{e}{2}s^{-d+1}}\right)
\cos \left(\frac{\pi}{d-s^{-d+1}}-\frac{\pi}{d+\frac{e}{2}s^{-d+1}}\right)<$$
$$<16s^{2}\pi^{2}\left(\frac{1}{(d-s^{-d+1})^{2}}-\frac{1}{(d+\frac{e}{2}s^{-d+1})^{2}}\right)=
\frac{16\pi^{2}(\frac{e}{2}+1)s^{-d+3}(2d+(\frac{e}{2}-1)s^{-d+1})}{(d-s^{-d+1})^{2}(d+\frac{e}{2}s^{-d+1})^{2}}.$$
Since $(d-s^{-d+1})^{2}>(d-1)^{2}>2d+1>2d+(\frac{e}{2}-1)s^{-d+1},$ it is suffices to prove that
$d+\frac{e}{2}s^{-d+1}>4\pi \sqrt{\frac{e}{2}+1} s^{\frac{-d+3}{2}}.$ Using $k\geq e+2\geq4$ and $d\geq7$, we obtain
$$s^{\frac{d-3}{2}}(d+\frac{e}{2}s^{-d+1})>(k-1)d\geq(e+1)d>4\pi \sqrt{\frac{e}{2}+1}.$$
\qed
\end{proof}


\end{document}